\theoremstyle{plain}
\newtheorem{theorem}{Theorem}
\newtheorem{proposition}{Proposition}[section]
\newtheorem{lemma}[proposition]{Lemma}
\newtheorem{corollary}[proposition]{Corollary}
\theoremstyle{definition}
\newtheorem{definition}[proposition]{Definition}
\theoremstyle{remark}
\def\printname#1{
	\if\draft y
		\smash{\makebox[0pt]{\hspace{-0.5in}
			\raisebox{8pt}{\tt\tiny #1}}}
	\fi
}
\newlength{\standardunitlength}
\long\def\@makecaption#1#2{%
     \vskip 10pt

\setbox\@tempboxa\hbox{
       \small\sf{\bfcaptionfont #1. }\ignorespaces #2}%
     \ifdim \wd\@tempboxa >\captionwidth {%
         \rightskip=\@captionmargin\leftskip=\@captionmargin
         \unhbox\@tempboxa\par}%
       \else
         \hbox to\hsize{\hfil\box\@tempboxa\hfil}%
     \fi}
\font\bfcaptionfont=cmssbx10 scaled \magstephalf
\newdimen\@captionmargin\@captionmargin=2\parindent
\newdimen\captionwidth\captionwidth=\hsize
\def\C{\mathcal C}
\def\D{\Delta}
\def\T{\mathcal T}
\def\T{\mathcal T}
\def\s{\sigma}
\def\s{\sigma}
\def\Per{\mathrm{per}}
\def\sign{\mathrm{sign}}
\begin{document}

\title[Binary Linear Codes, Dimers and Hypermatrices]{Binary Linear Codes, Dimers and Hypermatrices}

\author{Martin Loebl}
\address{Dept.~of Applied Mathematics\\
Charles University\\
Praha\\
Czech Republic.}
\email{loebl@kam.mff.cuni.cz}

\author{Pavel Rytir}
\address{The City College, The City University of New York\\
New York\\
 U.S.A.}
\email{rytirpavel@gmail.com}

\date{\today}
\thanks{The first author was artially supported by the Czech Science Foundation under contract number ~P202-13-21988S}. 

\begin{abstract}
We show that the weight enumerator of any binary linear code is equal to 
the permanent of a 3-dimensional hypermatrix (3-matrix). We also show that each permanent is a determinant of a 3-matrix. As an application we write the dimer partition function of a finite 3-dimensional cubic lattice as the determinant of the vertex-adjacency 3-matrix of a 2-dimensional simplicial complex which preserves the natural embedding of the cubic lattice.
\end{abstract}

\maketitle



\section{Introduction}
\label{sec.int}
The {\em Kasteleyn method} is a way how to calculate the Ising partition function on a finite graph $G$. It goes as follows. We first realize that the Ising partition function is equivalent to a
multivariable weight enumerator of the cut space of $G$. We modify $G$ to graph $G'$ so that 
this weight enumerator is equal to the generating function of the perfect matchings of $G'$ (the dimer partition function of $G$).
Such generating functions are hard to calculate. In particular, if $H$ is a bipartite graph then the generating function of the perfect matchings of $H$ is equal to the permanent of the biadjacency
matrix of $H$. If however this permanent may be turned into the determinant of a modified
matrix then the calculation can be successfully carried over since the determinants may be calculated efficiently. Already in 1913 Polya asked for which non-negative matrix $M$
we can change signs of its  entries so that, denoting by $M'$ the resulting matrix, we have $\Per(M)= \det(M')$.  We call these matrices {\em Kasteleyn} after the physicist Kasteleyn
who invented the Kasteleyn method. Kasteleyn proved in 1960's that all biadjacency matrices of the planar bipartite graphs are Kasteleyn. We say that a bipartite graph is Pfaffian if its biadjacency matrix is Kasteleyn. The problem to characterize the Kasteleyn matrices (or equivalently Pfaffian bipartite graphs) was
open until 1993, when Robertson, Seymour and Thomas \cite{RST} found a polynomial recognition
method and a structural description of the Kasteleyn matrices. They showed that the class
of the Kasteleyn matrices is rather restricted and extends only moderately beyond the biadjacency matrices of the
planar bipartite graphs.

Some years ago M.L. suggested to (1) extend the Pfaffian method to weight enumerators of general binary linear codes, and
(2) to use hypermatrices instead of matrices to gain new insight into the Ising and dimer problems for the cubic lattice.

In this paper we show that the weight enumerator of any binary linear code is equal to 
the permanent of the triadjacency 3-matrix of a 2-dimensional simplicial complex. In analogy to the standard
(2-dimensional) matrices we say that a 3-dimensional non-negative matrix $A$ is
 {\em Kasteleyn} if signs of its entries may be changed so that, denoting by $A'$ 
the resulting 3-dimensional matrix, we have $\Per(A)= \det(A')$. We show that in contrast
with the 2-dimensional case the class of Kasteleyn 3-dimensional matrices is rich; 
namely, for each 2-dimensional non-negative matrix $M$ there is a   3-dimensional non-negative Kasteleyn matrix $A$ so that $\Per(M)= \Per(A)$.  

Summarising, we have the following applications for the basic 3-dimensional statistical physics models.
\begin{itemize}
\item
We write the partition function of the dimer problem in the cubic lattice as 3-determinant.
\item
We write the partition function of the Ising problem in the cubic lattice as 3-permanent.
\end{itemize}

Using some results of this paper ML showed in \cite{L1} how to write both these partition functions as a single formal product.

\subsection{Basic definitions}
\label{sub.bd}
A linear code $\mathcal{C}$ of length $n$ and dimension $d$ over a field $\mathbb{F}$ is a linear subspace with dimension $d$ of the vector space $\mathbb{F}^n$. Each vector in $\mathcal{C}$ is called a codeword.
The weight $w(c)$ of a codeword $c$ is the number of non-zero entries of $c$; if we are given $w(i)$, $i= 1, \ldots, n$
then $w(c)= \sum_{i: c_i= 1}w(i)$.
The weight enumerator of a finite code $\mathcal{C}$ is defined according to the formula
\begin{equation*}
W_\mathcal{C}(x):=\sum_{c\in \mathcal{C}} x^{w(c)}.
\end{equation*}

A simplex $X$ is the convex hull of an affine independent set $V$ in $\mathbb{R}^d$. The dimension of $X$ is $\left\lvert V\right\rvert-1$, denoted by $\dim X$.
The convex hull of any non-empty subset of $V$ that defines a simplex is called a face of the simplex.
A simplicial complex $\Delta$ is a set of simplices fulfilling the following conditions: Every face of a simplex from $\Delta$ belongs to $\Delta$ and the intersection of every two simplices of $\Delta$ is a face of both.
The dimension of $\Delta$ is $\max\left\lbrace\dim X\vert X\in\Delta\right\rbrace$.
Let $\Delta$ be a $d$-dimensional simplicial complex. We define the incidence matrix $I=\left(I_{ij}\right)$ as follows: The rows are indexed by $\left(d-1\right)$-dimensional simplices and the columns by $d$-dimensional simplices. We set
\begin{equation*}
I_{ij}:=\begin{cases}
         1& \text{if }(d-1)\text{-simplex }i\text{ belongs to }d\text{-simplex }j,\\
         0& \text{otherwise}.
        \end{cases}
\end{equation*}
This paper studies 2-dimensional simplicial complexes where each maximal simplex is a triangle. We call them triangular configurations.
We denote the set of vertices of $\Delta$ by $V(\Delta)$, the set of edges by $E(\Delta)$ and the set of triangles by $T(\Delta)$.
The cycle space of $\Delta$ over a field $\mathbb{F}$, denoted $\ker_{\mathbb{F}}\Delta$, is the kernel of the incidence matrix $A$ of $\Delta$ over $\mathbb{F}$, that is
$\{x\vert Ax=_{\mathbb{F}}0\}$.

Let $\Delta$ be a triangular configuration. A matching of $\Delta$ is defined with respect to edges; hence, a matching of $\Delta$ is a subconfiguration $M$ of $\Delta$ such that $t_1\cap t_2$ does not contain an edge for every distinct $t_1,t_2\in T(M)$.
Let $\Delta$ be a triangular configuration. Let $M$ be a matching of $\Delta$. Then the defect of $M$ is the set $E(T)\setminus E(M)$.
The perfect matching of $\Delta$ is a matching with empty defect. We denote the set of all perfect matchings of $\Delta$ by $\mathcal{P}(\Delta)$.
Let $w:T(\Delta)\mapsto\mathbb{R}$ be weights of the triangles of $\Delta$.
The generating function of perfect matchings in $\Delta$ is defined to be $P_\Delta(x)=\sum_{P\in\mathcal{P}(\Delta)} x^{w(P)}$, where $w(P):=\sum_{t\in P} w(t)$.

A triangular configuration $\Delta$ is tripartite if the edges of $\Delta$ can be divided into three disjoint sets $E_1,E_2,E_3$ such that every triangle of $\Delta$ contains edges from all sets $E_1,E_2,E_3$. We call the sets $E_1,E_2,E_3$ tripartition of $\Delta$.

We recall that biadjacency matrix $A(x)=(a_{ij})$ of a bipartite graph $G=(V,W,E)$ is the $\lvert V\rvert\times\lvert W\rvert$ matrix, defined as follows: We set
\begin{equation*}
a_{ij}:=\begin{cases}
         x^{w(e)}& \text{if }v_i\in V,v_j\in W\text{ form an edge }e\text{ with weight }w(e),\\
         0& \text{otherwise}.
        \end{cases}
\end{equation*}

The triadjacency 3-matrix $A_{\Delta}(x)= A(x)=\left(a_{ijk}\right)$ of a tripartite triangular configuration $\Delta$ with tripartition $E_1,E_2,E_3$ is the $\lvert E_1\rvert \times \lvert E_2\rvert\times \lvert E_3\rvert$ three dimensional array of numbers, defined as follows: We set
\begin{equation*}
a_{ijk}:=\begin{cases}
         x^{w(t)}& \text{if }e_i\in E_1,e_j\in E_2,e_k\in E_3\text{ form a triangle }t\text{ with weight }w(t),\\
         0& \text{otherwise}.
        \end{cases}
\end{equation*}

The permanent of a $n\times n\times n$ 3-matrix $A$ is defined to be
\begin{equation*}
\Per(A)=\sum_{\sigma_1,\sigma_2\in S_n}\prod^n_{i=1}a_{i\sigma_1(i)\sigma_2(i)}.
\end{equation*} 

The determinant of a $n\times n\times n$ 3-matrix $A$ is defined to be
\begin{equation*}
\det(A)=\sum_{\s_1,\s_2\in S_n}\sign(\s_1)\sign(\s_2)\prod^n_{i=1}a_{i\sigma_1(i)\sigma_2(i)}.
\end{equation*}

\subsection{Main results}
\label{sub.main}

\begin{theorem}
\label{thm.main1}
 Let $\C$ be a binary linear code. Then there exists a tripartite triangular configuration $\Delta$ and weights $w:T(\Delta)\mapsto\mathbb{R}$ such that: If $A_{\Delta}(x)$ is triadjacency matrix of $\Delta$ then 
$$
\Per(A_{\Delta}(x))= W_{\C}(x).
$$
\end{theorem}

\begin{proof}
 Follows from Theorems~\ref{thm:repr1}, \ref{thm:matching} and \ref{thm.main2} below.
\end{proof}

\begin{theorem}
\label{thm:repr1}
Let $\mathcal{C}$ be a binary linear code. Then there exists a triangular configuration $\Delta$
and weights $w:T(\Delta)\mapsto\mathbb{R}$ such that
$W_\mathcal{C}(x)$ is equal to the weight enumerator of $\ker_{\mathbb{F}_2}\Delta$.
\end{theorem}
\begin{proof}
This follows from Theorem 6 of \cite{rytir2} by setting the weights of all the auxiliary triangles to zero.

\end{proof}

This theorem is extended to linear codes over $GF(p)$, where $p$ is a prime, in \cite{rytir3}.

\begin{theorem}[Rytíř \cite{rytir2}]
\label{thm:matching}
Let $\Delta$ be a triangular configuration with weights $w:T(\Delta)\mapsto\mathbb{R}$. Then there exists a triangular configuration $\Delta'$ and weights $w':T(\Delta')\mapsto\mathbb{R}$ such that $W_{\ker\Delta}(x)=P_{\Delta'}(x)$.
\end{theorem}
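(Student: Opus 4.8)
The plan is to realize $\Delta'$ as an assembly of local gadgets so that the perfect matchings of $\Delta'$ are in weight‑preserving bijection with the cycle space $\ker\Delta$, taken over $GF(2)$ (the case relevant to Theorem~\ref{thm.main1}): an element $c\in\ker\Delta$ is a set of triangles of $\Delta$ covering every edge an even number of times, and $W_{\ker\Delta}(x)=\sum_{c}x^{|c|}$, where $|c|$ is the number of triangles in $c$. This is the $2$‑complex version of the classical reduction expressing the generating function of the even subgraphs of a graph as a dimer partition function. There, ``$e\in S$'' is a bit shared by the two endpoints of the edge $e$, and each vertex imposes the constraint that the bits on its incident edges sum to $0$. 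Here a triangle $t$ plays the role of an edge but meets the three edges of $t$, so the bit ``$t\in c$'' must be broadcast to three constraints, and each edge $e$ of $\Delta$ imposes the constraint that the bits of the triangles through $e$ sum to $0$ --- which is exactly the defining condition of $\ker\Delta$.

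Concretely I would use two families of gadgets. For each triangle $t\in T(\Delta)$, a \emph{selector} $S_t$: a small triangular configuration with exactly two perfect matchings, a $0$‑state and a $1$‑state, which differ in whether the three designated \emph{port edges} $p_{t,e}$ (one for each edge $e$ of $t$) are all covered within $S_t$ (in the $0$‑state) or are all left to be covered by the gadgets attached at them (in the $1$‑state). For each edge $e\in E(\Delta)$ lying in the triangles $t_1,\dots,t_{d_e}$, a \emph{parity gadget} $P_e$ glued along the ports $p_{t_1,e},\dots,p_{t_{d_e},e}$, built so that, once the states of $S_{t_1},\dots,S_{t_{d_e}}$ are fixed, $P_e$ extends to a perfect matching --- covering everything internal to $P_e$ together with exactly those of its ports whose selector sits in the $1$‑state --- if and only if the number of such selectors is even, in which case the extension is unique. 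Gluing all the $S_t$ and $P_e$ along their common ports produces $\Delta'$. Put $w'$ equal to $1$ on a single triangle of each $S_t$ that belongs to the $1$‑state matching of $S_t$ but not to its $0$‑state matching, and $0$ on every other triangle of $\Delta'$. A perfect matching $M$ of $\Delta'$ then restricts to a definite state on each $S_t$, and so determines $c_M := \{t : S_t\text{ is in its }1\text{-state}\}$; the parity conditions enforced by the $P_e$ say precisely that $c_M$ covers every edge of $\Delta$ evenly, i.e.\ $c_M\in\ker\Delta$, and conversely every $c\in\ker\Delta$ arises from a matching which, by the rigidity of the gadgets, is unique. Since $w'(M)=|c_M|$ by the choice of $w'$, this bijection gives $P_{\Delta'}(x)=\sum_M x^{w'(M)}=\sum_{c\in\ker\Delta}x^{|c|}=W_{\ker\Delta}(x)$.

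All the substance is in the gadget construction, which I expect to be the main obstacle. A parity gadget of arbitrary arity $d_e$ can be synthesized from a binary tree of arity-$3$ parity (XOR) gadgets, so it suffices to exhibit one XOR gadget and one selector as honest $2$‑dimensional simplicial complexes whose maximal faces are triangles or edges and which realise exactly the prescribed perfect‑matching behaviour. Two points are delicate. First, one needs \emph{rigidity}: for each admissible pattern on the ports the gadget must have a \emph{unique} perfect matching (none for the inadmissible patterns); this exactness is what forces the final identity to hold on the nose rather than only up to a multiplicative power of $x$ or an integer factor. Second, the gluings must be carried out so that $\Delta'$ is still a simplicial complex --- the intersection of any two of its simplices a common face --- which requires care, since inside a single $P_e$ the ports of many selectors come together. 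Once consistent local gadgets with these two properties are available, the global bijection and the weight accounting above go through routinely, and the verification can, if desired, be organised by induction on $|T(\Delta)|$.
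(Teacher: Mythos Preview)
The paper does not prove Theorem~\ref{thm:matching}; it is quoted from \cite{rytir2} and used as a black box in the proof of Theorem~\ref{thm.main1}. So there is no in-paper argument to compare against directly.

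Your outline is the correct architecture and is consistent with the gadget philosophy visible elsewhere in the paper: a selector per triangle with two rigid states, a parity gadget per edge enforcing the $GF(2)$ constraint, all glued along designated port edges. Indeed the paper's own matching triangular triangle (Section~\ref{sec:matchtriangle}, Proposition~\ref{prop:trianglematching}) is exactly a selector in your sense: it has a unique perfect matching $M^1$ and a unique matching $M^0$ whose defect is precisely the nine port edges, and nothing in between. So the selector half of your plan is not hypothetical.

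The genuine gap is the one you name yourself: you do not construct the parity/XOR gadget, and this is where the content lies. You need a triangular configuration with three designated empty-triangle ports whose matchings with defect contained in the ports realise each even pattern exactly once and no odd pattern; it is not a priori clear such a rigid $2$-complex exists, and exhibiting one (together with checking that the tree-of-XORs assembly remains a simplicial complex and preserves rigidity) is precisely what \cite{rytir2} supplies. Until that is done, what you have is a correct reduction \emph{scheme}, not a proof. Your weight bookkeeping and the final bijection are fine once rigidity is available; the simplicial-complex compatibility issue you flag is real but routine once the local pieces are in hand (one can always lengthen tunnels to separate identifications).
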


\begin{theorem}
\label{thm.main2}
 Let $\Delta$ be a triangular configuration with weights $w:T(\Delta)\mapsto\mathbb{R}$. Then there exists a tripartite triangular configuration $\Delta'$ and weights $w':T(\Delta')\mapsto\mathbb{R}$ such that $P_\Delta(x)=\Per(A_{\Delta'}(x))$ where $A_{\Delta'}(x)$ is the triadjacency matrix of $\Delta'$.
\end{theorem}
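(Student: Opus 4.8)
The plan is to realize the generating function $P_\Delta(x)$ — a sum over perfect matchings of $\Delta$, where a perfect matching is a set of triangles covering each edge exactly once — as a permanent of a $3$-matrix, which is itself a sum $\sum_{\sigma_1,\sigma_2\in S_n}\prod_i a_{i\sigma_1(i)\sigma_2(i)}$ indexed by \emph{pairs} of permutations. The fundamental mismatch is that the permanent ranges over ordered pairs of bijections on a common index set, whereas a perfect matching of $\Delta$ is an unordered selection of triangles, and the three edge-classes of a tripartite configuration need not have equal size. So the construction of $\Delta'$ must (i) make the configuration tripartite, (ii) equalize the three edge-class sizes to a common $n$, and (iii) arrange that the only nonzero products $\prod_i a_{i\sigma_1(i)\sigma_2(i)}$ come from "genuine" perfect matchings of the relevant part of $\Delta$, with all the auxiliary structure contributing a fixed multiplicative constant (ideally $1$, or an easily divided-out factor absorbed into $w'$).

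First I would pass from $\Delta$ to a tripartite configuration. The natural device is barycentric-type subdivision: subdivide each triangle $t$ of $\Delta$ so that its three original edges are replaced by edges lying in three prescribed classes $E_1,E_2,E_3$; concretely, replace each triangle by a small gadget (e.g. a subdivided triangle with a central vertex, or three triangles around a barycenter) whose triangles each meet all three classes, and push the weight $w(t)$ onto one distinguished triangle of the gadget while giving the others weight $0$. One must check that a perfect matching of $\Delta$ corresponds bijectively to a perfect matching of the subdivided complex, and that the weight is preserved; the subdivision should be chosen minimally so that matchings do not proliferate. Call the result $\Delta_1$, tripartite with classes $E_1,E_2,E_3$.

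Next I would equalize $|E_1|=|E_2|=|E_3|=n$ and force the permanent to count exactly the perfect matchings. For equalization, adjoin to the deficient classes extra "dummy" edges together with dummy triangles of weight $0$ that link each dummy edge rigidly to fresh dummy edges in the other two classes — a disjoint family of isolated triangles, one per needed unit of padding — so that in any perfect matching the dummy edges are forced to be covered by their unique dummy triangles. Then in the triadjacency $3$-matrix $A_{\Delta'}(x)=(a_{ijk})$, a term $\prod_{i=1}^n a_{i\sigma_1(i)\sigma_2(i)}$ is nonzero iff for every $i$ the triple $(e_i,e_{\sigma_1(i)},e_{\sigma_2(i)})$ is a triangle; such a collection of $n$ triangles covers each edge of each class exactly once (because $\sigma_1,\sigma_2$ are bijections), hence is precisely a perfect matching of $\Delta_1$, and conversely each perfect matching, being a set of $n$ vertex-disjoint-in-each-class triangles, determines $\sigma_1,\sigma_2$ uniquely. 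Thus $\Per(A_{\Delta'}(x))=\sum_{P}x^{w(P)}=P_{\Delta_1}(x)=P_\Delta(x)$, after we verify the weight bookkeeping (the $0$-weight gadget and dummy triangles contribute $x^0=1$).

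The main obstacle I anticipate is step (i)–(iii) interacting: the subdivision used to achieve tripartiteness must not create \emph{spurious} perfect matchings, and the padding must not either — i.e. I need the correspondence "perfect matchings of $\Delta'$" $\leftrightarrow$ "perfect matchings of $\Delta$" to be an exact bijection preserving weight, not merely a weight-preserving surjection up to a constant. Controlling this means choosing the gadgets so that every edge internal to a gadget has a \emph{unique} triangle available to cover it once the "boundary" choice (whether $t$ is used in the matching) is fixed; a rigid, tree-like or fan-like gadget accomplishes this. Verifying rigidity of the chosen gadget, and checking that no perfect matching of $\Delta$ becomes un-extendable to $\Delta'$ (so the correspondence is also surjective), is the delicate combinatorial core; everything else is the routine translation between the "pair of permutations" description of $\Per$ and the "set of triangles, one incidence per edge" description of a perfect matching.
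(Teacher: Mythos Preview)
Your overall plan---make $\Delta$ tripartite, equalize the three edge classes, then identify the permanent with the perfect-matching generating function---is exactly the paper's strategy, and your treatment of the last two steps is essentially the content of Proposition~\ref{prop:per}. The gap is in step (i), the passage to a tripartite configuration.

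The suggestion of barycentric-type subdivision does not work as stated. First, the concrete proposal fails: the barycentric subdivision of a single triangle has twelve edges and six small triangles all meeting at the barycenter, and one checks that no four of them are pairwise edge-disjoint, so it has \emph{no} perfect matching at all. Second, and more structurally, any ``replace each triangle by a local gadget'' scheme must assign the \emph{shared} boundary edges of $\Delta$ to classes $E_1,E_2,E_3$ consistently across all incident triangles; this is a global $3$-edge-coloring constraint on $\Delta$ that need not be satisfiable, and you give no mechanism to enforce it. Your discussion of rigidity anticipates the danger of \emph{extra} matchings, but not these two issues (loss of matchings, and global colorability).

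The paper's device is different and avoids both problems: take three disjoint copies $\Delta_1,\Delta_2,\Delta_3$ of $\Delta$, declare $E_i:=E(\Delta_i)$, delete all triangles of the copies, and for each original triangle $t$ attach a fixed tripartite gadget $T_t$ (the ``matching triangular triangle'', built from the $S^5$ block and three tunnels) whose three ending empty triangles are identified with the copies $t_1,t_2,t_3$. The coloring problem disappears because no original edge is asked to carry a color; the shared edges simply live in their respective copies. Rigidity comes from the fact that $T_t$ has exactly two matchings with defect contained in its boundary: one ($M^1$) covering all nine boundary edges and one ($M^0$) covering none, so choosing $t$ in a matching of $\Delta$ corresponds to selecting $M^1(T_t)$ and not choosing it corresponds to $M^0(T_t)$. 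This yields the required weight-preserving bijection (Proposition~\ref{prop.mappingf}), and since each $T_t$ is the same tripartite gadget attached symmetrically to the three copies, $|E_1|=|E_2|=|E_3|$ without padding.
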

\begin{proof}
 Follows directly from Proposition~\ref{prop.trip1} and Proposition~\ref{prop:per} of Section~\ref{sec.construction}.
\end{proof}

\begin{definition}
\label{def.3ka}
We say that an $n\times n\times n$ 3-matrix $A$ is {\em Kasteleyn} if
there is 3-matrix $A'$ obtained from $A$ by changing signs of some entries so that
$\Per (A)= \det(A')$.
\end{definition}

The following Theorem \ref{thm.main3} is proved in Section \ref{sec.3mat}.

\begin{theorem}
\label{thm.main3}
Let $M$ be $n\times n$ matrix. Then one can construct 
 $m\times m\times m$ Kasteleyn 3-matrix $A$ with $m\leq n^2+ 2n$ and $\Per(M)= \Per(A)$.
Moreover, Kasteleyn signing is trivial, i.e., $\Per(A)= \det(A)$, and if $M$ is non-negative then $A$ is non-negative.
\end{theorem}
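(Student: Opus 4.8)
The plan is to realize $\Per(M)$ for an arbitrary $n\times n$ matrix $M$ as the permanent of a 3-matrix whose support is so constrained that its permanent already equals its determinant, i.e.\ the Kasteleyn signing is the trivial one. The guiding idea is that a $3$-matrix $A=(a_{ijk})$ has $\Per(A)=\sum_{\s_1,\s_2}\prod_i a_{i\s_1(i)\s_2(i)}$, and if for \emph{every} nonvanishing term in this sum the two permutations $\s_1,\s_2$ are forced to satisfy $\sgn(\s_1)\sgn(\s_2)=1$, then term-by-term the permanent and the determinant agree. So I would design $A$ from $M$ so that the ``second'' and ``third'' coordinates are essentially locked together (up to a fixed, even rearrangement), while the freedom that encodes the original permanent sum over $S_n$ is pushed into extra auxiliary indices in a sign-neutral way.

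Concretely, first I would index the first two axes of $A$ by the $n^2$ pairs $(r,c)\in[n]\times[n]$ together with $2n$ extra ``dummy'' coordinates (one block of $n$ to absorb rows, one block of $n$ to absorb columns), giving $m=n^2+2n$. I would place the entries $M_{rc}$ along a diagonal-like locus of the cube indexed by these pairs, and use the dummy coordinates and the third axis to enforce, via a system of forced $1$'s, that a nonzero term of $\Per(A)$ selects exactly one pair $(r,\sigma(r))$ for each row $r$ with $\sigma$ a permutation — thereby reconstructing $\sum_\sigma \prod_r M_{r\sigma(r)}=\Per(M)$ — while the remaining $n^2+n$ choices are completely rigid. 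The key point to verify is that in the cube, once the ``data'' part of a term is fixed, the auxiliary part of both $\s_1$ and $\s_2$ is uniquely determined and is a product of transpositions pairing a data-coordinate with its dummy, so that $\s_1$ and $\s_2$ restricted to the auxiliary coordinates contribute equal signs, and on the data coordinates $\s_1=\s_2$; hence $\sgn(\s_1)\sgn(\s_2)=1$ always. Non-negativity and the $\Per(M)=\Per(A)$ identity are then immediate from the construction, and $\Per(A)=\det(A)$ follows since every surviving monomial has sign $+1$. Realizing $A$ as the triadjacency $3$-matrix of an honest tripartite triangular configuration is the same bookkeeping dressed geometrically: the three edge-classes $E_1,E_2,E_3$ correspond to the three axes, and each allowed triangle to a nonzero entry.

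The main obstacle, and the part I would spend the most care on, is the rigidity argument: I must certify that the ``forced $1$'' gadget truly forces — that there is no alternative, possibly odd, pairing of the auxiliary coordinates that also yields a nonzero product. This is exactly where $3$-dimensional permanents differ from $2$-dimensional ones (in the $2$-dimensional case Robertson--Seymour--Thomas show forcing is very restrictive), so I would make the auxiliary support a disjoint union of simple ``identity blocks'' whose only system of representatives on each axis is the identity (or a fixed even permutation), using that a $0/1$ matrix with exactly one $1$ per row and column admits a unique permutation through it. A secondary, more clerical obstacle is checking the size bound $m\le n^2+2n$ is met with a single clean construction rather than padding, and that the tripartiteness and the simplicial-complex conditions (every two simplices meet in a common face) are not violated by the gadgets; I expect these to go through but they need to be written out. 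I would then assemble the proof by: (1) defining $A$ from $M$ explicitly; (2) proving the bijection between nonzero monomials of $\Per(A)$ and pairs (permutation $\sigma$, forced auxiliary completion), giving $\Per(A)=\Per(M)$; (3) proving each such monomial has $\sgn(\s_1)\sgn(\s_2)=+1$, giving $\Per(A)=\det(A)$; and (4) observing non-negativity is inherited and, if desired, exhibiting the tripartite triangular configuration whose triadjacency matrix is $A$.
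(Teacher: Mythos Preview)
Your high-level strategy coincides with the paper's: build $A$ so that every nonvanishing monomial $\prod_i a_{i\s_1(i)\s_2(i)}$ already has $\sgn(\s_1)\sgn(\s_2)=+1$, whence $\Per(A)=\det(A)$ with no signing needed. But the route the paper takes to secure this is quite different from yours, and more structured.

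The paper does not attempt a direct term-by-term sign verification. Instead it first proves a clean sufficient criterion (Theorem~\ref{thm.k1}): if the two ``projection'' bipartite graphs $G_1^A$ (on $W_0\times W_1$) and $G_2^A$ (on $W_0\times W_2$) are Pfaffian, then $A$ is Kasteleyn, and the Kasteleyn signing of $A$ is obtained by multiplying the two 2-dimensional Pfaffian signings. The construction of $A$ is then an explicit gadget on the bipartite graph $G$ of nonzero entries of $M$: for each edge $e=ab$ of $G$ one introduces a ``middle'' vertex $w(0,e)\in W_0$ and two companions $w(1,e),w(2,e)$, together with two ``absorbing'' vertices per original vertex. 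The size is $m=|E(G)|+2n\le n^2+2n$. The payoff is that each component of $G_1^A$ (and of $G_2^A$) is just a bundle of $\deg_G(v)$ internally disjoint length-$3$ paths between two fixed endpoints, hence planar; and orienting every edge from $W_0$ outward is already a Pfaffian orientation, so the signing is trivial. The bijection $\Per(M)=\Per(A)$ is then a perfect-matching $\leftrightarrow$ perfect-strong-matching argument on the triangular configuration $\T(G)$ (vertex-adjacency, not triadjacency).

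Compared with your plan, the paper's approach buys exactly what you flag as your main obstacle: it replaces the delicate ``rigidity'' verification by the single structural observation that $G_1^A,G_2^A$ are planar, and then invokes the classical theory of Pfaffian orientations. Your sketch, by contrast, leaves the actual placement of the forced $1$'s and the third-axis indexing unspecified, and the claim that on the data block one can arrange $\s_1=\s_2$ while the auxiliary block decomposes into matched transpositions is asserted rather than exhibited; without the explicit construction it is not possible to check that no ``odd'' alternative completion survives. (Note also that in the paper's construction $\s_1$ and $\s_2$ are \emph{not} equal on the data part; the sign control comes entirely from the Pfaffian-orientation argument on the projections, not from any $\s_1=\s_2$ coincidence.) Your idea could plausibly be made to work, but as written it is a programme, whereas the paper's proof is complete because Theorem~\ref{thm.k1} absorbs the hard step.
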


In the last section, applying Theorem \ref{thm.main3}, we write the dimer partition function of a finite 3-dimensional cubic lattice as the determinant of the vertex-adjacency 3-matrix of a 2-dimensional simplicial complex which preserves the natural embedding of the cubic lattice. We also include the Binet-Cauchy formula for the determinant of a 3-matrix.

\section{Triangular configurations and permanents}
\label{sec.construction}
In this section we prove Theorem~\ref{thm.main2}. We use basic building blocks as in Rytíř~\cite{rytir2}. However, the use is novel and we need to stress the tripartitness of basic blocks. Hence we briefly describe them again.
\subsection{Triangular tunnel}
Triangular tunnel is depicted in Figure~\ref{fig:triantunnel}. An {\em empty triangle} is a set of three edges forming a boundary of a triangle. We call the empty triangles $\{a,b,c\}$ and $\{a',b',c'\}$ ending.
\begin{figure}[h]
 \centering
 \includegraphics[width=320pt]{./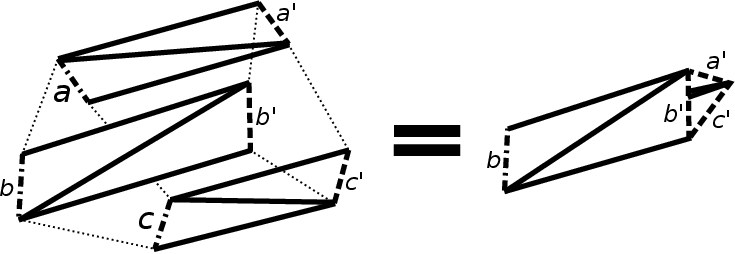}
 \caption{Triangular tunnel}
 \label{fig:triantunnel}
\end{figure}

\begin{proposition}
The triangular tunnel has exactly one matching $M^L$ with defect $\{a,b,c\}$ and exactly one matching $M^R$ with defect $\{a',b',c'\}$.\qed
\end{proposition}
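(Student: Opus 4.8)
The plan is to analyze the triangular tunnel combinatorially by tracking which triangles a matching is forced to contain. The key structural feature of the tunnel (Figure~\ref{fig:triantunnel}) is that it is a sequence of small triangular blocks glued along edges, so that once one decides how the first ending triangle $\{a,b,c\}$ is ``treated'' — i.e.\ whether its three edges are covered by triangles inside the tunnel or left in the defect — the treatment of every subsequent block is forced. First I would set up notation for the internal edges and triangles of the tunnel, grouping them into the successive blocks, and record for each block the local constraint: in a matching, two triangles sharing an edge cannot both be present, and an edge not belonging to the defect must be covered by exactly one triangle of the matching.

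The main step is a propagation (induction along the tunnel) argument. I would argue that if the defect is required to be exactly $\{a,b,c\}$, then the edges $a',b',c'$ must be covered, which forces the last block's triangle to be in the matching; this in turn forbids the adjacent triangle, which forces the next one in, and so on, so the choice of triangles is rigid from the $\{a',b',c'\}$ end back to the $\{a,b,c\}$ end. One then checks that the resulting set of triangles is genuinely a matching (no two of the chosen triangles share an edge) and that its defect is precisely $\{a,b,c\}$ and nothing more. This yields existence and uniqueness of $M^L$; the argument for $M^R$ is the mirror image, propagating from the $\{a,b,c\}$ end instead. I would also note that no matching can have defect contained in \emph{both} ending triangles simultaneously unless it is empty on the relevant edges, which is why $M^L$ and $M^R$ are distinct and are the only two matchings with the stated defects.

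The part requiring the most care is verifying the rigidity of the propagation at the ``joints'' of the tunnel, where more than two triangles may be incident to a single internal edge, so that ``this triangle is in, hence its neighbor is out'' must be justified for the actual incidence pattern in Figure~\ref{fig:triantunnel} rather than a schematic one. This is the main obstacle: it is a finite check, but it has to be done against the precise block structure of the picture, and it is exactly the place where the tripartite refinement (which we will need later) interacts with the combinatorics. Since the tunnel is built from a fixed finite gadget repeated in a line, once the single-block analysis is done the induction is routine, and the proposition follows.
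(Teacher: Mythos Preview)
Your proposal is correct and is exactly the natural argument: the tunnel is a linear chain of triangles, and once you fix which ending empty triangle lies in the defect, the covering constraints propagate rigidly from the opposite end and determine every triangle of the matching uniquely. Your worry about ``joints where more than two triangles meet an edge'' is in fact moot for this gadget---in the tunnel each internal edge lies in exactly two triangles, so the propagation is the clean alternating one you describe, with no branching to resolve.

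The paper, however, gives no proof at all: the proposition is stated with a terminal \textsf{qed} and the reader is expected to read it off Figure~\ref{fig:triantunnel} directly. So your plan is not so much a different route as the route the authors deemed too routine to write out; what you gain by writing it is an explicit verification that the defect is \emph{exactly} $\{a,b,c\}$ (respectively $\{a',b',c'\}$) and not a proper subset, which is worth one sentence but no more.
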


\begin{proposition}
The triangular tunnel is tripartite.
\end{proposition}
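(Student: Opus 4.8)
The plan is to exhibit an explicit tripartition $E(\Delta) = E_1 \sqcup E_2 \sqcup E_3$ of the edges of the triangular tunnel and verify that every triangle meets all three classes. First I would examine the combinatorial structure of the tunnel as drawn in Figure~\ref{fig:triantunnel}: it is a chain of triangles glued along edges, running between the two ending empty triangles $\{a,b,c\}$ and $\{a',b',c'\}$. The key observation is that the building block is designed precisely so that its triangles can be 3-colored on their edges in a globally consistent way; the two ending triangles already have their three edges labelled $a,b,c$ and $a',b',c'$, and these labels are meant to be compatible with the tripartition, i.e. $a,a' \in E_1$, $b,b' \in E_2$, $c,c' \in E_3$.

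The main step is to propagate this edge-coloring through the tunnel. I would argue by induction along the chain of triangles: give the first triangle (containing $a,b,c$) the colors $1,2,3$ on its three edges, and then note that each successive triangle shares exactly one edge with the previous one (since consecutive triangles of the tunnel are glued along a single edge, matchings being defined by the no-shared-edge condition). The shared edge already has a color, say $i$; the new triangle has two further edges, and we assign them the two remaining colors from $\{1,2,3\}\setminus\{i\}$. One must check this assignment is forced and consistent — that no edge is asked to receive two different colors — which follows from the fact that the tunnel is, combinatorially, a "triangulated strip" whose dual is a path, so there are no cycles of triangles to create an obstruction. At the far end one checks that the induced colors on $a',b',c'$ are indeed $1,2,3$ (matching the labelling in the figure).

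The step I expect to be the main obstacle is purely bookkeeping: reading off from Figure~\ref{fig:triantunnel} the precise incidence pattern of edges and triangles and confirming that the strip really is simply-connected (dual-path) rather than containing a triangulated annulus or a branch point, since only then is the 3-coloring obstruction-free. Once the incidence data is pinned down, the inductive coloring argument is routine. I would close by remarking that this tripartition is exactly the one used when the tunnels are assembled into the larger configuration $\Delta'$ of Theorem~\ref{thm.main2}, so that the local tripartitions of the blocks agree on shared edges and glue to a global tripartition.
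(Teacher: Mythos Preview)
Your high-level plan---exhibit an explicit tripartition and check each triangle meets all three classes---is exactly what the paper does (its entire proof is ``Follows from Figure~\ref{fig:tunneltrip}''). The difference is that you try to \emph{derive} the colouring by an inductive propagation argument along a supposed strip, whereas the paper simply draws the colouring.

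Your propagation argument, however, rests on two claims that do not hold. First, the tunnel is not a ``triangulated strip whose dual is a path'': its two ends are empty \emph{triangles} $\{a,b,c\}$ and $\{a',b',c'\}$, so the underlying surface is a triangulated annulus (think of a triangular prism with the two end faces removed), and the dual graph of its triangles contains a cycle running around the tube. This is precisely the ``triangulated annulus'' case you flagged as the potential obstacle---and it is the case that actually occurs. Second, even on a genuine strip the extension is not ``forced'': a new triangle sharing one already-coloured edge has two uncoloured edges and two available colours, hence two extensions, so uniqueness fails (though existence would still be fine). On the annulus you must instead check that a colouring closes up consistently around the cycle, and this is a nontrivial parity condition rather than an automatic consequence of simple connectivity. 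Finally, your expectation that $a,a'\in E_1$, $b,b'\in E_2$, $c,c'\in E_3$ is not the property the construction actually needs: compare Proposition~\ref{prop:mtttrip}, where each ending empty triangle of the assembled block has all three of its edges in a \emph{single} class.

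In short, the approach is right but the combinatorial model is wrong; once you realise the tunnel is an annulus, the cleanest fix is to do what the paper does and write down the colouring explicitly, then verify by inspection.
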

\begin{proof}
Follows from Figure~\ref{fig:tunneltrip}.
\begin{figure}
 \centering
 \includegraphics{./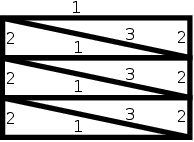}
 \caption{Tunnel tripartition}
 \label{fig:tunneltrip}
\end{figure}

\end{proof}

\subsection{Triangular configuration $S^5$}
Triangular configuration $S^5$ is depicted in Figure~\ref{fig:s5}. Letter "X" denotes empty triangles. We call these empty triangles ending.
\begin{figure}[h]
 \centering
 \includegraphics{./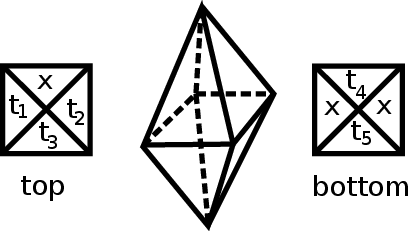}
 \caption{Triangular configuration $S^5$}
 \label{fig:s5}
\end{figure}

\begin{proposition}
Triangular configuration $S^5$ has one exactly perfect matching  and exactly one matching with defect on edges of all empty triangles.
\end{proposition}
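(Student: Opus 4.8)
The plan is to analyze the combinatorics of $S^5$ directly from Figure~\ref{fig:s5}, in the same spirit as the triangular-tunnel proposition. First I would fix notation for the triangles of $S^5$: list every triangle of the configuration together with the three edges it occupies, distinguishing the edges that lie on the ending empty triangles (the ``X'' edges) from the internal edges. A perfect matching must cover \emph{every} internal edge exactly once and must cover \emph{none} of the edges of the empty triangles; the second matching we are after must instead leave exactly the edges of all empty triangles uncovered while still covering every internal edge exactly once. So the whole statement reduces to showing that the incidence structure ``internal edge $\mapsto$ set of triangles containing it'' admits exactly one exact cover in each of the two prescribed boundary conditions.

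The key steps, in order, are: (1) identify the internal edges that are forced — i.e.\ internal edges contained in only one or two triangles — and propagate the forced choices, exactly as one does when solving an exact-cover instance by unit propagation; (2) show that after propagation the configuration $S^5$ is small enough that only one completion remains in the ``perfect matching'' case, giving existence and uniqueness of the perfect matching; (3) redo the propagation with the alternative boundary condition (all empty-triangle edges are in the defect), where the triangles adjacent to the ending empty triangles are now \emph{forbidden} rather than forced, and check that again a unique completion survives. In both cases uniqueness is the real content, and existence will fall out of the propagation producing a consistent full cover. I would present this as a short case analysis keyed to the labels in Figure~\ref{fig:s5}, rather than a formula.

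The main obstacle is purely bookkeeping: $S^5$ has enough triangles and edges that a careless case split could blow up. The way to keep it under control is to choose the propagation order well — start from the edges on the two ending empty triangles, since those immediately fix or kill the triangles incident to them, and then work inward along the ``spine'' of $S^5$ so that at each stage there is an internal edge contained in at most two not-yet-decided triangles. If the figure is designed (as the name $S^5$ and the analogy with the tunnel suggest) so that this frontier never branches, the argument closes with no genuine case explosion. I would also remark that the two matchings are related by the obvious involution swapping the roles of the two ends, which halves the work: establish the statement for the perfect matching, then observe that the symmetry of $S^5$ carries it to the defect-on-all-empty-triangles matching, or argue the two cases in parallel. \qed
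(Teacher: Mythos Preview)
Your overall strategy --- inspect the figure, fix which triangles are forced or forbidden by the boundary condition, and propagate --- is exactly what the paper does, only the paper executes it in one line: $S^5$ has just five triangles $t_1,\dots,t_5$, and the paper simply names the perfect matching $\{t_1,t_2,t_4,t_5\}$ and the all-empty-triangle-defect matching $\{t_3\}$. So there is no case explosion to worry about; the configuration is tiny.

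There are, however, two genuine slips in your setup that would derail the argument if carried through as written. First, you have the boundary condition for the perfect matching backwards. A perfect matching has \emph{empty} defect, so it must cover \emph{every} edge of $\Delta$, including the edges lying on the ending empty triangles; it is the \emph{other} matching that leaves exactly those edges uncovered. As stated, your two boundary conditions are identical (both ``cover internal edges, avoid empty-triangle edges''), which cannot be what you intend. Second, $S^5$ has \emph{three} ending empty triangles, not two --- this is why it is used to build the matching triangular triangle, which has three tunnel ends. So there is no ``involution swapping the two ends'' to exploit, and the closing symmetry remark does not apply. Once you correct the boundary condition and drop the symmetry shortcut, the propagation you describe collapses immediately to the paper's one-line answer.
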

\begin{proof}
The unique perfect matching is $\{t_1,t_2,t_4,t_5\}$. We denote it by $M^1(S^5)$. The unique matching with defect on edges of all empty triangles is $\{t_3\}$. We denote it by $M^0(S^5)$.
\end{proof}

\begin{proposition}
Triangular configuration $S^5$ is tripartite. 
\end{proposition}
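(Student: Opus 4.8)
The plan is to exhibit an explicit tripartition of the edge set of $S^5$, exactly as was done for the triangular tunnel. Recall that a triangular configuration is tripartite if its edges split into $E_1,E_2,E_3$ so that every triangle meets all three classes. Since $S^5$ is a small, explicitly drawn configuration (Figure~\ref{fig:s5}), the natural approach is combinatorial and finite: list the triangles $t_1,\dots,t_5$ together with their three bounding edges, and then colour the edges with three colours $1,2,3$ so that each $t_i$ receives one edge of each colour. Concretely, I would first record the edge–triangle incidences from the figure, then set up the ``rainbow'' constraint as a proper $3$-edge-colouring requirement on the auxiliary structure whose ``hyperedges'' are the triples $E(t_i)$, and finally produce one valid colouring and verify it triangle by triangle.

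The key steps, in order, are: (1) fix notation for the edges of $S^5$ and list $E(t_i)$ for $i=1,\dots,5$, including the edges lying on the three ending empty triangles; (2) propose a concrete assignment $c:E(S^5)\to\{1,2,3\}$; (3) check for each of the five triangles that $\{c(e):e\in E(t_i)\}=\{1,2,3\}$; (4) conclude that $E_j=c^{-1}(j)$ is a tripartition, hence $S^5$ is tripartite. As in the tunnel case, the cleanest presentation is simply to say ``follows from'' a figure displaying the colouring, so in the write-up I would include (or reference) a figure ``$S^5$ tripartition'' analogous to Figure~\ref{fig:tunneltrip}, and let the picture carry the verification; the textual proof then reduces to asserting that every triangle in that figure is rainbow-coloured.

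The only place where something could go wrong is consistency at the shared edges: since two triangles of $S^5$ may share an edge (indeed the perfect matching $\{t_1,t_2,t_4,t_5\}$ and the defect structure show how the $t_i$ overlap), a colour forced on an edge by one triangle must be compatible with the rainbow requirement of the other triangle containing it. So the main obstacle — really the only nontrivial point — is to check that the system of rainbow constraints over the five triples is simultaneously satisfiable, i.e. that no edge is over-constrained. Given that $S^5$ has so few triangles and the tunnel (a more complex gadget) is already tripartite, I expect this to go through with a single explicit colouring; if a naive first attempt clashes on a shared edge, one permutes colours locally until all five triangles are rainbow, which must succeed because the incidence pattern of $S^5$ is sparse enough (each edge lies in at most two triangles, and the ``conflict graph'' on triangles is easily $3$-colourable in the required sense).
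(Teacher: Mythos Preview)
Your proposal is correct and matches the paper's approach exactly: the paper's proof is simply ``Follows from Figure~\ref{fig:s5colored}'', i.e.\ an explicit $3$-colouring of the edges verified triangle by triangle, precisely as you describe. Your write-up anticipates this almost verbatim, including the suggestion to let a coloured figure carry the verification.
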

\begin{proof}
Follows from Figure~\ref{fig:s5colored}.
 \begin{figure}[h]
 \centering
 \includegraphics{./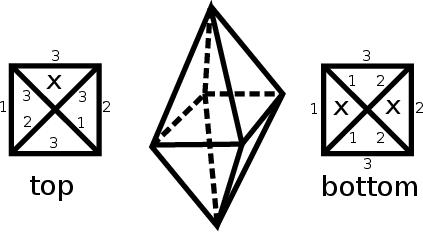}
 \caption{Triangular configuration $S^5$ with partitioning}
 \label{fig:s5colored}
\end{figure}

\end{proof}

\subsection{Matching triangular triangle}
\label{sec:matchtriangle}

\begin{figure}[h]
 \centering
 \includegraphics{./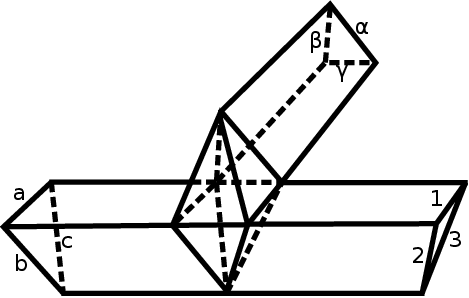}
 \caption{Matching triangular triangle}
 \label{fig:matchtrian}
\end{figure}

The {\em matching triangular triangle} is obtained from the triangular configuration $S^5$ and three triangular tunnels in the following way:
Let $T_1$, $T_2$ and $T_3$ be triangular tunnels. Let $t_1^{T_1},p^{T_1}$; $t_1^{T_2},q^{T_2}$ and $t_1^{T_3},r^{T_3}$ be the ending empty triangles of $T_1$, $T_2$ and $T_3$, respectively.
Let $t_1^{S^5},t_2^{S^5},t_3^{S^5}$ be ending empty triangles of $S^5$.
We identify $t_1^{T_1}$ with $t_1^{S^5}$; $t_1^{T_2}$ with $t_2^{S^5}$ and $t_1^{T_3}$ with $t_3^{S^5}$. The matching triangular triangle is defined to be $T_1\cup S^5\cup T_2\cup T_3$.
The matching triangular triangle is depicted in Figure~\ref{fig:matchtrian}.

\begin{proposition}
\label{prop:trianglematching}
The matching triangular triangle has exactly one perfect matching $M^1$ and exactly one matching $M^0$ with defect $\{1,2,3,a,b,c,\alpha,\beta,\gamma\}$. It has no matching with defect $E$, where $\emptyset\neq E\subsetneq\{1,2,3,a,b,c,\alpha,\beta,\gamma\}$.
\end{proposition}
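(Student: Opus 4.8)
The plan is to verify Proposition~\ref{prop:trianglematching} by analyzing how matchings of the matching triangular triangle decompose over its three constituent tunnels $T_1,T_2,T_3$ and the central block $S^5$, using the two gluing triangles between each tunnel and $S^5$ as the bookkeeping interface. First I would recall from the tunnel proposition that on each triangular tunnel $T_i$ there are exactly two matchings: $M^L$ with defect on the ending triangle $t_1^{T_i}$ (the one glued to $S^5$), and $M^R$ with defect on the other ending triangle ($p^{T_1}$, $q^{T_2}$, or $r^{T_3}$, whose edges are exactly $\{a,b,c\}$, $\{\alpha,\beta,\gamma\}$, $\{1,2,3\}$ in the figure labelling). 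Likewise $S^5$ has exactly two matchings: $M^1(S^5)=\{t_1,t_2,t_4,t_5\}$, which is perfect (covers all its edges, in particular all edges of the three ending empty triangles), and $M^0(S^5)=\{t_3\}$, whose defect is precisely the edges of all three ending empty triangles of $S^5$.

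The key observation is that at each identified empty triangle $t_1^{T_i}=t_i^{S^5}$ the three shared edges must be covered from exactly one side: if a matching $M$ of the whole configuration uses $M^L$ on $T_i$ (leaving $t_1^{T_i}$ uncovered from the tunnel side) then $S^5$ must cover those three edges, forcing the $S^5$-part to be $M^1(S^5)$; conversely if $M$ uses $M^R$ on $T_i$, the tunnel already covers the shared edges, so $S^5$ must \emph{not} cover them, forcing the $S^5$-part to be $M^0(S^5)$. Since a single choice of $S^5$-part is shared by all three interfaces, the three tunnels cannot be chosen independently: either all three use $M^L$ (matched with $M^1(S^5)$) or all three use $M^R$ (matched with $M^0(S^5)$); any mixed choice would require $S^5$ to be simultaneously $M^1(S^5)$ and $M^0(S^5)$, which is impossible. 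This immediately yields exactly two matchings of the whole configuration. In the first case every edge is covered, giving the perfect matching $M^1$. In the second case the uncovered edges are exactly the outer ending triangles of the three tunnels, i.e.\ $\{1,2,3,a,b,c,\alpha,\beta,\gamma\}$, giving $M^0$; and because the only freedom was the single global binary choice, there is no matching whose defect is a nonempty proper subset of that edge set.

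The step I expect to be the main obstacle is making the interface argument fully rigorous rather than merely pictorial: one must check that the shared edges of each identified empty triangle are genuinely covered \emph{only} by triangles lying in $T_i$ or in $S^5$ (no third triangle of the configuration touches them), and that within each block the restriction of a global matching to that block is itself a matching of the block, so that the two-matchings classification of tunnel and of $S^5$ applies verbatim. I would handle this by noting that the identification in the construction is along ending empty triangles only, so each edge of the configuration belongs either to a single tunnel or to $S^5$, and an edge shared by $T_i$ and $S^5$ lies in no other block; hence the triangles incident to it are exactly those of $T_i$ and of $S^5$ incident to it. The matching condition is local (no two triangles of $M$ share an edge), so its restriction to a block is a matching of that block, and every edge of the configuration lies in some block, so the global defect is the union of the block defects along the overlap-free decomposition. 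With these facts in place the case analysis above is complete, and the proposition follows.
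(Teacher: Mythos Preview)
Your proposal is correct and follows essentially the same approach as the paper's own proof: decompose the matching triangular triangle into $S^5$ and the three tunnels, invoke the two-matchings classification on each block, and argue that the shared ending-triangle edges force all four blocks to make the same binary choice, yielding exactly $M^1$ and $M^0$. The paper phrases the forcing from the $S^5$ side (``contains $M^1(S^5)$ or $M^0(S^5)$; this determines the remaining triangles''), whereas you phrase it symmetrically via the interface edges, but the argument is the same; your added paragraph on why restriction to a block is again a matching and why interface edges are touched only by the two adjacent blocks simply makes explicit what the paper leaves to the reader.
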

\begin{proof}
The perfect matching is $M^1:=M^1(S^5)\cup M^L(T_1)\cup M^L(T_2)\cup M^L(T_3)$.
The matching $M^0$ is $M^0(S^5)\cup M^R(T_1)\cup M^R(T_2)\cup M^R(T_3)$.

Any matching of the matching triangular triangle with defect $E\subset\{1,2,3,a,b,c,\alpha,\beta,\gamma\}$ contains $M^1(S^5)$ or $M^0(S^5)$. This determines remaining triangles in a matching with defect $E\subseteq\{1,2,3,a,b,c,\alpha,\beta,\gamma\}$. Hence, there are just two matchings $M^1$ and $M^0$ with defect $E\subseteq\{1,2,3,a,b,c,\alpha,\beta,\gamma\}$.
\end{proof}

\begin{proposition}
\label{prop:mtttrip}
Matching triangular triangle $T$ is tripartite and there is a tripartition of $T$ such that $a,b,c\in E_1$; $1,2,3\in E_2$; $\alpha,\beta,\gamma\in E_3$.
\end{proposition}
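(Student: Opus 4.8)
The plan is to assemble the tripartition of the matching triangular triangle $T$ from tripartitions of its four constituent pieces $S^5,T_1,T_2,T_3$, glued along the empty triangles where they are identified. The enabling observation is a \emph{gluing principle}: if a triangular configuration $\Delta$ is a union $\Delta_1\cup\Delta_2$ whose intersection consists only of the edges and vertices of an empty triangle (no $2$-simplex), then every triangle of $\Delta$ lies entirely in $\Delta_1$ or entirely in $\Delta_2$, so any tripartition of $\Delta_1$ together with any tripartition of $\Delta_2$ that puts the three shared edges into the same three classes combine into a tripartition of $\Delta$. In the present situation $T$ is obtained from $S^5$ and $T_1,T_2,T_3$ by identifying each ending empty triangle $t_i^{S^5}$ with $t_1^{T_i}$, these three empty triangles are pairwise disjoint, and the blocks are otherwise disjoint, so $T(T)=T(S^5)\,\dot\cup\,T(T_1)\,\dot\cup\,T(T_2)\,\dot\cup\,T(T_3)$ and the gluing principle applies three times. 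Hence it suffices to produce a tripartition of $S^5$ and a tripartition of each $T_i$ that agree on the glued empty triangle $t_i^{S^5}=t_1^{T_i}$ and that place $p^{T_1}=\{a,b,c\}$, $q^{T_2}=\{1,2,3\}$, $r^{T_3}=\{\alpha,\beta,\gamma\}$ (relabelling the tunnels if Figure~\ref{fig:matchtrian} names them otherwise) each into a single class, the three classes being distinct.

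First I would fix the tripartition of $S^5$ from Figure~\ref{fig:s5colored} and record the three colourings $\chi_1,\chi_2,\chi_3$ it induces on the ending empty triangles $t_1^{S^5},t_2^{S^5},t_3^{S^5}$ (each $\chi_i$ is simply an assignment of the three classes to the three edges of $t_i^{S^5}$, read off from the picture). Next I would establish the key lemma about the triangular tunnel: for every prescribed colouring $\chi$ of the three edges of one of its ending empty triangles, the tunnel admits a tripartition that restricts to $\chi$ on that end and is monochromatic on the other end, and the colour of that monochromatic end may be taken to be any prescribed one of the three classes. This is a finite verification on the tunnel of Figure~\ref{fig:tunneltrip}: one checks that the displayed tripartition, together with the freedom to relabel the three classes (and, if available, the reflection exchanging the two ends of the tunnel), realises all the required pairs $(\chi,\ \text{monochromatic colour})$.

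With these two ingredients the proof finishes by routing. For each $i$, choose a tripartition of $T_i$ that restricts to $\chi_i$ on the end glued to $S^5$ and is monochromatic on its free end, with the monochromatic colour chosen to be class $E_1$ for $T_1$, class $E_2$ for $T_2$, class $E_3$ for $T_3$. Applying the gluing principle to $S^5\cup T_1$, then appending $T_2$, then $T_3$, produces a tripartition $E_1,E_2,E_3$ of $T$ with $\{a,b,c\}\subseteq E_1$, $\{1,2,3\}\subseteq E_2$, $\{\alpha,\beta,\gamma\}\subseteq E_3$, which is exactly the assertion.

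The main obstacle is the tunnel lemma: one must confront the explicit tunnel and check that \emph{every} boundary colouring of one end extends to a tripartition whose opposite end is monochromatic of a freely chosen colour. If the tunnel turns out to impose a tighter relation between its two ends, the remedy is to also exploit the remaining freedom in the choice of $S^5$'s tripartition, or to splice an additional short tunnel segment performing the needed colour conversion; but given how these blocks are engineered in Rytíř~\cite{rytir2} I expect the direct version to go through.
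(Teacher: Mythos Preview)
Your gluing principle is exactly right and is implicitly what the paper uses too. The difference is one of economy: the paper's entire proof is the sentence ``Follows from Figure~\ref{fig:s5colored} and Figure~\ref{fig:tunneltrip}.'' In other words, the figures already display a specific tripartition of $S^5$ and a specific tripartition of the tunnel, and one simply checks by inspection that these particular colourings match along the glued empty triangles and leave the three free ends monochromatic in three distinct classes. No general lemma is invoked.

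Your route instead aims for a universal \emph{tunnel lemma}: that for \emph{every} colouring of one end and \emph{every} target colour, the tunnel admits a compatible tripartition with the opposite end monochromatic of that colour. This is stronger than what is needed (you only need it for the three specific colourings $\chi_1,\chi_2,\chi_3$ coming from the fixed $S^5$ tripartition, paired with three distinct target colours), and as you yourself note, its truth is not obvious a priori and would require a separate finite verification---with a proposed fallback of splicing extra tunnel segments if it fails. That detour is unnecessary: once you have the figures in front of you, the three required instances are simply read off, which is all the paper does. So your plan is correct in structure but over-engineered; the acknowledged gap (the tunnel lemma) dissolves once you replace the universal statement by the concrete instances the figures already provide.
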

\begin{proof}
Follows from Figure~\ref{fig:s5colored} and Figure~\ref{fig:tunneltrip}.
\end{proof}

\subsection{Linking three triangles by matching triangular triangle}
Let $\Delta$ be a triangular configuration. Let $t_1,t_2$ and $t_3$ be three edge disjoint triangles of $\Delta$.

The {\em link by matching triangular triangle} between $t_1,t_2$ and $t_3$ in $\Delta$ is the triangular configuration $\Delta'$ defined as follows.
Let $T$ be a matching triangular triangle defined in Section~\ref{sec:matchtriangle}.
Let $\{a,b,c\},\{1,2,3\},\{\alpha,\beta,\gamma\}$ be ending empty triangles of $T$.
Let $t_1^1,t_1^2,t_1^3$ and $t_2^1,t_2^2,t_2^3$ and $t_3^1,t_3^2,t_3^3$ be edges of $t_1$ and $t_2$ and $t_3$, respectively.
We relabel edges of $T$ such that $\left\{a,b,c\right\}=\left\{t_1^1,t_1^2,t_1^3\right\}$ and $\left\{1,2,3\right\}=\left\{t_2^1,t_2^2,t_2^3\right\}$ and $\left\{\alpha,\beta,\gamma\right\}=\left\{t_3^1,t_3^2,t_3^3\right\}$. We let $\Delta':=\Delta\cup T$.

\subsection{Construction}

Let $\Delta$ be a triangular configuration and let $w:T(\Delta)\mapsto\mathbb{R}$ be weights of triangles. We construct a tripartite triangular configuration $\Delta'$ and weights $w':T(\Delta')\mapsto\mathbb{R}$ in two steps.
First step: We start with triangular configuration
$$\Delta'_1:=\Delta_1\cup\Delta_2\cup\Delta_3$$
where $\Delta_1,\Delta_2,\Delta_3$ are disjoint copies of $\Delta$.
Let $t$ be a triangle of $\Delta$. We denote the corresponding copies of $t$ in $\Delta_1,\Delta_2,\Delta_3$ by $t_1,t_2,t_3$, respectively.

Second step: For every triangle $t$ of $\Delta$, we link $t_1,t_2,t_3$ in $\Delta'_1$ by triangular matching triangle $T$. We denote this triangular matching triangle by $T_t$. Then we remove triangles $t_1,t_2,t_3$ from $\Delta'_1$. We choose a triangle $t'$ from $M^1(T_t)$ and set $w'(t'):=w(t)$. We set $w'(t'):=0$ for $t'\in T(T_t)\setminus\{t\}$. The resulting configuration is desired configuration $\Delta'$. 

\begin{proposition}
\label{prop.tripartite}
Triangular configuration $\Delta'$ is tripartite.
\end{proposition}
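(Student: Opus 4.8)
The plan is to exhibit an explicit tripartition of $\Delta'$ by assembling the local tripartitions that were established for the building blocks. Recall that $\Delta'$ is obtained from the three disjoint copies $\Delta_1,\Delta_2,\Delta_3$ of $\Delta$ by deleting, for each triangle $t$ of $\Delta$, the triples $t_1,t_2,t_3$ and gluing in a matching triangular triangle $T_t$ along the ending empty triangles $\{a,b,c\}=\{t_1^1,t_1^2,t_1^3\}$, $\{1,2,3\}=\{t_2^1,t_2^2,t_2^3\}$, $\{\alpha,\beta,\gamma\}=\{t_3^1,t_3^2,t_3^3\}$. So the edge set of $\Delta'$ is the union of the interior (non-ending) edges of the various $T_t$'s together with all the edges of $\Delta_1,\Delta_2,\Delta_3$; note the ending edges of $T_t$ are identified with the edges of the copies $t_1,t_2,t_3$, so nothing is lost there — only the triangles $t_1,t_2,t_3$ themselves are removed.

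First I would define the tripartition $E_1,E_2,E_3$ of $\Delta'$ by the rule: every edge coming from $\Delta_1$ goes into $E_1$, every edge from $\Delta_2$ into $E_2$, every edge from $\Delta_3$ into $E_3$, and the interior edges of each $T_t$ are placed according to the fixed internal tripartition of the matching triangular triangle supplied by Proposition~\ref{prop:mtttrip}. The crucial compatibility point is precisely the ``Moreover'' clause of Proposition~\ref{prop:mtttrip}: the matching triangular triangle admits a tripartition in which its three ending empty triangles sit one in each class, $\{a,b,c\}\subset E_1$, $\{1,2,3\}\subset E_2$, $\{\alpha,\beta,\gamma\}\subset E_3$. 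Under the relabelling used in the linking construction this says exactly that the ending edges of $T_t$ identified with the (copy-$1$) edges of $t$ land in $E_1$, those identified with the copy-$2$ edges land in $E_2$, and those identified with the copy-$3$ edges land in $E_3$ — so the assignment coming from ``which $\Delta_i$ does this edge belong to'' and the assignment coming from ``internal tripartition of $T_t$'' agree on every shared edge. Hence the local tripartitions glue to a single well-defined partition of $E(\Delta')$.

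Next I would check that every triangle of $\Delta'$ meets all three classes. A triangle of $\Delta'$ is either an interior triangle of some $T_t$ or one of the (weight-$0$ or weight-$w(t)$) triangles of $T_t$ — in any case it is a triangle of some matching triangular triangle $T_t$, since the original triangles $t_1,t_2,t_3$ were deleted. For such a triangle the property ``it has one edge in each of the three classes'' is exactly the statement that $T_t$ is tripartite with the chosen tripartition, again Proposition~\ref{prop:mtttrip}; and this property is unaffected by how we named the classes or by the identification of ending edges, since on ending edges the two namings coincide by the previous paragraph. Therefore every triangle of $\Delta'$ is split $1{+}1{+}1$ by $E_1,E_2,E_3$, which is the definition of tripartite.

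The only real obstacle is the bookkeeping at the gluing sites — verifying that the ending empty triangle of $T_t$ that carries edges destined for $E_1$ is genuinely the one identified with the first copy $t_1$, and similarly for the other two, so that no edge is asked to lie in two different classes. This is handled entirely by invoking the precise form of Proposition~\ref{prop:mtttrip} together with the relabelling convention fixed in the ``Linking three triangles'' subsection; once those are lined up there is nothing left to compute. I would close by remarking that distinct $T_t$ and $T_{t'}$ share no edges (they attach to disjoint triples of triangles of $\Delta$, and the triangles of $\Delta$ were assumed pairwise edge-disjoint where linked), so there is no further consistency condition between different building blocks, and the global partition is well defined. \qed
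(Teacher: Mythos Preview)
Your proof is correct and follows essentially the same approach as the paper: put $E(\Delta_i)$ into class $E_i$ and extend over each $T_t$ via Proposition~\ref{prop:mtttrip}, using its ``moreover'' clause for compatibility at the ending edges. One small inaccuracy in your closing remark: distinct $T_t$ and $T_{t'}$ \emph{can} share ending edges whenever $t$ and $t'$ share an edge in $\Delta$ (the edge-disjointness hypothesis in the linking construction refers to the three copies $t_1,t_2,t_3$ within a single link, not to different triangles of $\Delta$); this is harmless, though, since on any such shared edge both blocks assign the same class $E_i$, exactly by the compatibility you already established.
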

\begin{proof}
The triangular configuration $\Delta'$ is constructed from three disjoint triangular configurations $\Delta_1,\Delta_2,\Delta_3$.
From these configurations all triangles are removed. Hence, we can put edges $E(\Delta_i)$ to set $E_i$ for $i=1,2,3$.
The remainder of $\Delta'$ is formed by matching triangular triangles. Every matching triangular triangle connects edges of $\Delta_1,\Delta_2,\Delta_3$. By Proposition~\ref{prop:mtttrip} the matching triangular triangle is tripartite and its ends belong to different partities.

\end{proof}
We denote by $2^X$ the set of all subsets $X$.
We define a mapping $f:2^{T(\Delta)}\mapsto 2^{T(\Delta')}$ as: Let $S$ be a subset of $T(\Delta)$ then
$$f(S):=\{M^1(T_t)\vert t\in S\}\cup\{M^0(T_t)\vert t\in T(\Delta)\setminus S\}.$$

\begin{proposition}
\label{prop.mappingf}
The mapping $f$ is a bijection between the set of perfect matchings of $\Delta$ and the set of perfect matchings of $\Delta'$ and $w(M)=w'(f(M))$ for every $M\subseteq T(\Delta)$.
\end{proposition}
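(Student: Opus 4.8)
The plan is to verify that $f$ is well-defined, injective, surjective, and weight-preserving, in that order, using the structural propositions already established for the matching triangular triangle.

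\textbf{Well-definedness and that $f(M)$ is a perfect matching.} First I would fix a perfect matching $M$ of $\Delta$ and check that $f(M)$ is a perfect matching of $\Delta'$. Recall that $\Delta'$ is built by taking three disjoint copies $\Delta_1,\Delta_2,\Delta_3$ of $\Delta$, deleting all copied triangles, and for each triangle $t\in T(\Delta)$ gluing in a matching triangular triangle $T_t$ whose three ending empty triangles are identified with the edge-triples of $t_1,t_2,t_3$. The edges of $\Delta'$ are exactly the edges of the $\Delta_i$ together with the internal edges of the $T_t$'s. By Proposition~\ref{prop:trianglematching}, $M^1(T_t)$ is the unique perfect matching of $T_t$ (so it covers all internal edges of $T_t$ and all of $\{a,b,c,1,2,3,\alpha,\beta,\gamma\}$, i.e.\ all nine edges of $t_1,t_2,t_3$), while $M^0(T_t)$ covers all internal edges of $T_t$ but leaves exactly the nine edges of $t_1,t_2,t_3$ as defect. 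Now take $e\in E(\Delta')$. If $e$ is internal to some $T_t$, it is covered exactly once by whichever of $M^1(T_t),M^0(T_t)$ we selected. If $e$ lies in some $\Delta_i$, say $e$ is the copy of $\bar e\in E(\Delta)$, then since $M$ is a perfect matching of $\Delta$ there is a unique triangle $t\in M$ with $\bar e\in E(t)$; for that $t$ we chose $M^1(T_t)$ which covers $e$ once, and for every other triangle $t'$ containing $\bar e$ we have $t'\notin M$, so we chose $M^0(T_{t'})$ which does not cover $e$. Hence $e$ is covered exactly once. It remains to observe that the selected triangles form a genuine matching, i.e.\ no two share an edge: within a single $T_t$ this is immediate since $M^1(T_t)$ and $M^0(T_t)$ are themselves matchings, and triangles from distinct $T_t,T_{t'}$ can only meet along $\Delta_i$-edges, where the edge-disjointness of $t_1,t_2,t_3$ versus $t'_1,t'_2,t'_3$ (edge-disjoint triangles in $\Delta$) plus the count above shows each such edge is used at most once. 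So $f(M)\in\mathcal P(\Delta')$.

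\textbf{Injectivity and surjectivity.} Injectivity is clear: from $f(M)$ one recovers $M=\{t\in T(\Delta): M^1(T_t)\subseteq f(M)\}$, since $M^1(T_t)\neq M^0(T_{t})$ for every $t$ and, by Proposition~\ref{prop:trianglematching}, these are the only two candidate sub-matchings restricted to $T_t$. For surjectivity, let $N$ be any perfect matching of $\Delta'$. For each $t$, restrict $N$ to the triangles lying in $T_t$; the defect of this restriction is a subset $E\subseteq\{1,2,3,a,b,c,\alpha,\beta,\gamma\}$ because all internal edges of $T_t$ must be covered (they lie in no other piece) and the only edges of $T_t$ that can be covered from outside are the nine edges of $t_1,t_2,t_3$. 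By Proposition~\ref{prop:trianglematching}, the only matchings of $T_t$ with defect contained in that nine-element set are $M^1(T_t)$ (defect $\emptyset$) and $M^0(T_t)$ (defect all nine), so $N\cap T(T_t)\in\{M^1(T_t),M^0(T_t)\}$. Define $S:=\{t: N\cap T(T_t)=M^1(T_t)\}$. The same edge-counting argument as above, applied to the $\Delta_i$-edges, shows that for each $\bar e\in E(\Delta)$ exactly one triangle containing $\bar e$ lies in $S$ (using that $N$ covers the copy of $\bar e$ in $\Delta_1$ exactly once, and that $M^1$ covers it while $M^0$ does not); hence $S$ is a perfect matching of $\Delta$ and $N=f(S)$.

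\textbf{Weight-preservation.} Finally, by the construction of $w'$, for each $t$ we picked one triangle $t'\in M^1(T_t)$ with $w'(t')=w(t)$ and set $w'=0$ on all other triangles of $T_t$; in particular $w'$ vanishes on every triangle of $M^0(T_t)$ and on every triangle of $M^1(T_t)$ other than $t'$. Therefore $w'(f(M))=\sum_{t\in M}w'(M^1(T_t))+\sum_{t\notin M}w'(M^0(T_t))=\sum_{t\in M}w(t)=w(M)$, which is the claimed identity.

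\textbf{Main obstacle.} The delicate point is the surjectivity argument: one must argue that an arbitrary perfect matching $N$ of $\Delta'$ cannot "mix" behavior across the gluing interfaces — e.g.\ cover some edges of $t_1,t_2,t_3$ from inside $T_t$ and others from inside a neighboring block — and this is exactly what Proposition~\ref{prop:trianglematching}'s clause "no matching with defect $E$ for $\emptyset\neq E\subsetneq\{1,2,3,a,b,c,\alpha,\beta,\gamma\}$" rules out. Getting the bookkeeping of which edges belong to which block right, and invoking that clause cleanly, is the crux; everything else is the straightforward double-counting sketched above.
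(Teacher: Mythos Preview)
Your argument is correct and follows essentially the same route as the paper's proof: both show $f(M)$ is a perfect matching by checking that internal edges of each $T_t$ are covered by whichever of $M^0,M^1$ is selected and that each $\Delta_i$-edge is covered exactly by the unique $M^1(T_t)$ with $t\in M$ containing it, and both obtain surjectivity by restricting an arbitrary perfect matching of $\Delta'$ to each block $T_t$ and invoking Proposition~\ref{prop:trianglematching}. Your write-up is in fact more thorough than the paper's, which omits the explicit verification that $f(M)$ is a matching (no two triangles share an edge) and does not spell out the weight-preservation computation at all.
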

\begin{proof}
By definition, the mapping $f$ is an injection. By Proposition~\ref{prop:trianglematching}, every inner edge of $T_t$, $t\in T(\Delta)$, is covered by $f(S)$ for any subset $S$ of $T(\Delta)$. Let $M$ be a perfect matching of $\Delta$. We show that $f(M)$ is perfect matching of $\Delta'$. 
$$f(M)=\{M^1(T_t)\vert t\in M\}\cup\{M^0(T_t)\vert t\in T(\Delta)\setminus M\}.$$
Let $e$ be an edge of $\Delta$ and let $e_1,e_2,e_3$ be corresponding copies in $\Delta_1,\Delta_2,\Delta_3$.
Let $t_1,t_2,\dots,t_l$ be triangles incident with edge $e$ in $\Delta$. Let $t_k$ be the triangle from perfect matching $M$ incident with $e$.
By definition of $\Delta'$, the edges $e_1,e_2,e_3$ are incident only with triangles of $T_{t_i}$, $i=1,\dots,l$.
The edges $e_1,e_2,e_3$ are covered by $M^1(t_k)$. The edges of $T_{t_i}$, $i=1,\dots,l$,  $i\neq k$ are covered by $M^0(T_{t_i})$. Hence $f(M)$ is a perfect matching of $\Delta'$.

Let $M'$ be a perfect matching of $\Delta'$. By Proposition~\ref{prop:trianglematching}, $M'=\{M^1(T_t)\vert t\in S\}\cup\{M^0(T_t)\vert t\in T(\Delta)\setminus S\}$ for some set $S$. The set $S$ is a perfect matching of $\Delta$. Thus, the mapping $f$ is a bijection.

\end{proof}
\begin{corollary}
\label{cor.matchpolyn}
$P_\Delta(x)=P_{\Delta'}(x)$. \qed
\end{corollary}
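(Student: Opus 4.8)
The plan is to read off Corollary~\ref{cor.matchpolyn} directly from Proposition~\ref{prop.mappingf}, since essentially all the content has already been established there. First I would recall the definitions: $P_\Delta(x)=\sum_{M\in\mathcal{P}(\Delta)}x^{w(M)}$ and $P_{\Delta'}(x)=\sum_{M'\in\mathcal{P}(\Delta')}x^{w'(M')}$, where $\mathcal{P}(\Delta),\mathcal{P}(\Delta')$ denote the sets of perfect matchings. The only inputs needed are that $f\colon\mathcal{P}(\Delta)\to\mathcal{P}(\Delta')$ is a bijection and that $w(M)=w'(f(M))$ for every $M$, both of which are the conclusion of Proposition~\ref{prop.mappingf}.

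The single step is then to reindex the sum defining $P_{\Delta'}(x)$ along the bijection $f$: every $M'\in\mathcal{P}(\Delta')$ is uniquely $f(M)$ for some $M\in\mathcal{P}(\Delta)$, so
\[
P_{\Delta'}(x)=\sum_{M'\in\mathcal{P}(\Delta')}x^{w'(M')}=\sum_{M\in\mathcal{P}(\Delta)}x^{w'(f(M))}=\sum_{M\in\mathcal{P}(\Delta)}x^{w(M)}=P_\Delta(x),
\]
where the third equality uses the weight-preservation statement $w(M)=w'(f(M))$. This establishes the claimed polynomial identity.

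I do not anticipate any genuine obstacle here: the corollary is a formal bookkeeping consequence of a weight-preserving bijection, which is exactly why it is stated with \qed in the text. All the real work — verifying that $f$ lands in $\mathcal{P}(\Delta')$, that it is injective and surjective, and that the weights match under the construction (which assigns $w'(t')=w(t)$ to one distinguished triangle $t'\in M^1(T_t)$ and $0$ to the rest of $T_t$) — was carried out in Propositions~\ref{prop:trianglematching} and~\ref{prop.mappingf}. If I were being careful I would only double-check that the zero-weighted triangles contribute nothing, i.e. that $w'(f(M))$ really equals $\sum_{t\in M}w(t)$ and not some shifted quantity, but this is immediate from $w'(t')=0$ for all $t'\in T(T_t)\setminus\{t'\}$ and the fact that every perfect matching of $\Delta'$ uses exactly one of $M^1(T_t),M^0(T_t)$ inside each block $T_t$.
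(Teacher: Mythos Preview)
Your proposal is correct and matches the paper's approach exactly: the paper states the corollary with an immediate \qed, indicating it is a direct consequence of the weight-preserving bijection of Proposition~\ref{prop.mappingf}, and your reindexing argument spells out precisely that deduction.
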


\begin{proposition}
\label{prop.trip1}
Let $\Delta$ be a triangular configuration with weights $w:T(\Delta)\mapsto\mathbb{R}$. Then there exist a tripartite triangular configuration $\Delta'$ and weights $w':T(\Delta')\mapsto\mathbb{R}$ such that there is a bijection $f$ between the set of perfect matchings $\mathcal{P}(\Delta)$ and the set of perfect matchings of $\mathcal{P}(\Delta')$. Moreover, $w(M)=w'(f(M))$ for every $M\in\mathcal{P}(\Delta)$, and $P_\Delta(x)=P_{\Delta'}(x)$.
\end{proposition}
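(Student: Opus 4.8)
The plan is to assemble Proposition~\ref{prop.trip1} from the pieces already constructed in this section. All the combinatorial work is done; what remains is to verify that the two‑step construction preceding Proposition~\ref{prop.tripartite} produces a configuration with the advertised properties and to package Proposition~\ref{prop.tripartite}, Proposition~\ref{prop.mappingf} and Corollary~\ref{cor.matchpolyn} into a single statement. Concretely, given $\Delta$ with weights $w$, I take $\Delta'$ to be the configuration obtained in the ``Second step'': start from three disjoint copies $\Delta_1\cup\Delta_2\cup\Delta_3$, link each triple of copies $t_1,t_2,t_3$ of a triangle $t\in T(\Delta)$ by a matching triangular triangle $T_t$, delete the triangles $t_1,t_2,t_3$, and set the weights $w'$ by putting $w'(t')=w(t)$ on one chosen triangle $t'\in M^1(T_t)$ and $w'=0$ on all other triangles of $T_t$.

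The key steps, in order, are: (1) quote Proposition~\ref{prop.tripartite} to get that $\Delta'$ is tripartite; (2) take $f:\mathcal P(\Delta)\to\mathcal P(\Delta')$ to be the restriction of the mapping $f:2^{T(\Delta)}\to 2^{T(\Delta')}$ defined before Proposition~\ref{prop.mappingf}, and invoke Proposition~\ref{prop.mappingf}, which already states precisely that this $f$ is a bijection between $\mathcal P(\Delta)$ and $\mathcal P(\Delta')$ and that $w(M)=w'(f(M))$ for every $M$; (3) conclude $P_\Delta(x)=P_{\Delta'}(x)$, which is exactly Corollary~\ref{cor.matchpolyn}, obtained by summing $x^{w(M)}=x^{w'(f(M))}$ over $M\in\mathcal P(\Delta)$ and using that $f$ is a weight‑preserving bijection. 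So the proof is essentially one line: apply Proposition~\ref{prop.tripartite}, Proposition~\ref{prop.mappingf}, and Corollary~\ref{cor.matchpolyn} to the configuration $\Delta'$ just described.

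The only point that deserves a sentence of care is the weight bookkeeping: a perfect matching $M'$ of $\Delta'$ uses, for each $t\in T(\Delta)$, either $M^1(T_t)$ or $M^0(T_t)$, and by construction $w'$ is supported only on the distinguished triangle $t'\in M^1(T_t)$ with value $w(t)$; hence $w'(M')=\sum_{t:\,M^1(T_t)\subseteq M'} w(t)=w(f^{-1}(M'))$, matching the claim of Proposition~\ref{prop.mappingf}. I expect no real obstacle here — the substantive content (that $S\mapsto f(S)$ lands in perfect matchings exactly when $S$ is a perfect matching of $\Delta$, using Proposition~\ref{prop:trianglematching} to rule out intermediate defects inside each $T_t$) has already been discharged in Proposition~\ref{prop.mappingf}. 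The ``hard part'', to the extent there is one, is purely expository: making sure the bijection named in the statement is literally the map already analyzed, so that nothing new has to be proved.
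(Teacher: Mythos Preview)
Your proposal is correct and matches the paper's own proof essentially verbatim: the paper simply says ``Follows directly from Propositions~\ref{prop.tripartite} and~\ref{prop.mappingf} and Corollary~\ref{cor.matchpolyn}.'' Your additional paragraph on the weight bookkeeping is a harmless elaboration of what Proposition~\ref{prop.mappingf} already asserts.
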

\begin{proof}
Follows directly from Propositions~\ref{prop.tripartite} and~\ref{prop.mappingf} and Corollary~\ref{cor.matchpolyn}.
\end{proof}

\begin{proposition}
\label{prop:per}
Let $\Delta$ be a tripartite triangular configuration with tripartition $E_1,E_2,E_3$ such that $\lvert E_1\rvert=\lvert E_2\rvert=\lvert E_3\rvert$ and let $A_\Delta(x)$ be its triadjacency matrix. Then $P_\Delta(x)=\Per(A_\Delta(x))$.
\end{proposition}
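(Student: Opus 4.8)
The plan is to exhibit an explicit weight-preserving bijection between the perfect matchings of $\Delta$ and the nonzero terms in the permanent expansion of $A_\Delta(x)$. Write $n := \lvert E_1\rvert = \lvert E_2\rvert = \lvert E_3\rvert$ and index $E_1 = \{e_1,\dots,e_n\}$, $E_2 = \{f_1,\dots,f_n\}$, $E_3 = \{g_1,\dots,g_n\}$. Recall that by definition $\Per(A_\Delta(x)) = \sum_{\sigma_1,\sigma_2 \in S_n} \prod_{i=1}^n a_{i\,\sigma_1(i)\,\sigma_2(i)}$, where $a_{ijk} = x^{w(t)}$ if $e_i, f_j, g_k$ span a triangle $t$ of $\Delta$ and $a_{ijk}=0$ otherwise. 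So a pair $(\sigma_1,\sigma_2)$ contributes a nonzero monomial exactly when, for every $i$, the triple $(e_i, f_{\sigma_1(i)}, g_{\sigma_2(i)})$ is a triangle of $\Delta$; in that case it contributes $x^{\sum_i w(t_i)}$ where $t_i$ is that triangle.

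First I would check that such a pair $(\sigma_1,\sigma_2)$ gives rise to a perfect matching. The triangles $t_1,\dots,t_n$ so produced use each edge $e_i$ once (the $E_1$-edge of $t_i$), each edge $f_j$ once (since $\sigma_1$ is a bijection), and each edge $g_k$ once (since $\sigma_2$ is a bijection). In a tripartite triangular configuration every triangle has exactly one edge in each $E_\ell$, so two distinct triangles sharing an edge would force a repeated value of $\sigma_1$ or $\sigma_2$ on the corresponding index; hence the $t_i$ are pairwise edge-disjoint, i.e. $\{t_1,\dots,t_n\}$ is a matching, and since it covers all $3n$ edges its defect is empty. Conversely, given a perfect matching $M$ of $\Delta$: since $M$ covers every edge and every triangle of $M$ meets each $E_\ell$ in exactly one edge, $M$ must consist of exactly $n$ triangles, and the assignment $e_i \mapsto$ (the $E_2$-edge of the triangle of $M$ through $e_i$) defines a permutation $\sigma_1$, and similarly $\sigma_2$ from the $E_3$-edges; edge-disjointness of $M$ is exactly what makes these maps injective, hence bijective. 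These two constructions are mutually inverse, and in both directions the exponent $\sum_i w(t_i)$ equals $w(M)$, so the bijection is weight-preserving. Summing $x^{w(M)}$ over all perfect matchings on one side equals summing the nonzero monomials of the permanent on the other, giving $P_\Delta(x) = \Per(A_\Delta(x))$.

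The routine part is the bookkeeping that $\sigma_1,\sigma_2$ are genuinely well-defined and bijective; the one place that needs care — the main (mild) obstacle — is verifying that distinct nonzero terms of the permanent really correspond to distinct perfect matchings and vice versa, i.e. that no collapsing occurs. This rests on the tripartite hypothesis: because each triangle has a unique edge in each class, a perfect matching is recovered unambiguously from the pair of permutations (and two different pairs $(\sigma_1,\sigma_2) \neq (\sigma_1',\sigma_2')$ must differ in some coordinate $i$, forcing a different triangle through $e_i$ and hence a different matching). Once this is observed, no cancellation or multiplicity issues arise, and the identity of generating functions follows immediately.
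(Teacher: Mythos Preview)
Your proof is correct and follows essentially the same approach as the paper: both set up the obvious correspondence between pairs $(\sigma_1,\sigma_2)\in S_n\times S_n$ yielding a nonzero product and perfect matchings of $\Delta$, noting that the contribution of such a pair is $x^{w(M)}$. Your version is simply more carefully spelled out (checking injectivity/surjectivity and that distinct pairs give distinct matchings), whereas the paper states the bijection in one line.
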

\begin{proof}
We have
$$\Per(A_\Delta(x))=\sum_{\sigma_1,\sigma_2\in S_n}\prod^n_{i=1}a_{i\sigma_1(i)\sigma_2(i)}.$$
Every perfect triangular matching between partities $E_1,E_2,E_3$ can be encoded by two permutations $\sigma_1,\sigma_2$ and vice versa.
If matching $M$ is a subset of $T(\Delta)$, then $$\prod^n_{i=1}a_{i\sigma_1(i)\sigma_2(i)}=\prod^n_{i=1}x^{w([i\sigma_1(i)\sigma_2(i)])}=x^{w(M)},$$
where $[ijk]$ denotes a triangle of $\Delta$ with edges $i,j,k$.
If $M$ is not a subset of $T(\Delta)$, then there is $i$ such that $a_{i\sigma_1(i)\sigma_2(i)}=0$. Hence $\prod^n_{i=1}x^{w([i\sigma_1(i)\sigma_2(i)])}=0$.
Therefore
$$\sum_{\sigma_1,\sigma_2\in S_n}\prod^n_{i=1}a_{i\sigma_1(i)\sigma_2(i)}=P_\Delta(x).$$
\end{proof}

\section{Kasteleyn 3-matrices}
\label{sec.3mat}
We first introduce a necessary condition for a 3-matrix to be Kasteleyn. Let $A$ be a $\lvert V_0\rvert\times\lvert V_1\rvert\times \lvert V_2\rvert$
non-negative 3-matrix, where $\lvert V_i\rvert= m, i=1,2,3$. We first define two bipartite graphs $G_1, G_2$ as follows.
We let, for $i=1,2$, $G_i^A=G_i= (V_0, V_i, E_i)$ where 
$$
E_1= \left\{\{a,b\}\vert a\in V_0, b\in V_1 \text{ and }A_{abc}\neq 0 \text{ for some }c\right\},
$$
and
$$
E_2= \left\{\{a,c\}\vert a\in V_0, c\in V_2 \text{ and }A_{abc}\neq 0 \text{ for some }b\right\}.
$$

\begin{theorem}
\label{thm.k1}
If $A$ is such that both $G^A_1,G^A_2$ are Pfaffian bipartite graphs then $A$ is Kasteleyn.
\end{theorem}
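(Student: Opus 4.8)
The plan is to assemble a Kasteleyn signing of $A$ out of Kasteleyn signings of the biadjacency matrices of $G^A_1$ and $G^A_2$. Since $\lvert V_0\rvert=\lvert V_1\rvert=\lvert V_2\rvert=m$, the biadjacency matrix $B_1$ of $G^A_1$ (rows indexed by $V_0$, columns by $V_1$) and the biadjacency matrix $B_2$ of $G^A_2$ (rows indexed by $V_0$, columns by $V_2$) are square $0/1$ matrices, and both are Kasteleyn by hypothesis. The first step is to unwind what this means in the $0/1$ case: $\Per(B_i)$ merely counts the perfect matchings of $G^A_i$, so if $\varepsilon^{(i)}=(\varepsilon^{(i)}_{ab})$ is a sign pattern for which the matrix $B_i'$ with entries $\varepsilon^{(i)}_{ab}(B_i)_{ab}$ satisfies $\det(B_i')=\Per(B_i)$, then every surviving monomial of $\det(B_i')$ must equal $+1$; that is, $\sgn(\sigma)\prod_a\varepsilon^{(i)}_{a\sigma(a)}=+1$ for every permutation $\sigma$ that corresponds to a perfect matching of $G^A_i$. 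Fix such patterns $\varepsilon^{(1)}$ on $V_0\times V_1$ and $\varepsilon^{(2)}$ on $V_0\times V_2$; they exist because $B_1$ and $B_2$ are Kasteleyn.

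Next, define the signing $A'$ of $A$ entrywise by $a'_{abc}:=\varepsilon^{(1)}_{ab}\,\varepsilon^{(2)}_{ac}\,a_{abc}$. Each entry is multiplied by $\pm 1$, so $A'$ really is obtained from $A$ by changing signs of some entries, zeros stay zero, and if $A$ is non-negative the signing only flips signs. The reason for this product form is that the sign of $a'_{abc}$ factors into a part depending only on $(a,b)$ and a part depending only on $(a,c)$, which matches precisely the way the sum $\sum_{\sigma_1,\sigma_2\in S_m}$ defining $\det(A')$ separates into its two independent permutations.

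It then remains to compare $\Per(A)$ and $\det(A')$ term by term over pairs $(\sigma_1,\sigma_2)$. If $\prod_i a_{i\sigma_1(i)\sigma_2(i)}=0$, the corresponding summands of both $\Per(A)$ and $\det(A')$ vanish. Otherwise every factor $a_{i\sigma_1(i)\sigma_2(i)}$ is nonzero, so $A_{i\sigma_1(i)\sigma_2(i)}\neq 0$ certifies $\{i,\sigma_1(i)\}\in E_1$ and $\{i,\sigma_2(i)\}\in E_2$ for every $i$; hence $\sigma_1$ corresponds to a perfect matching of $G^A_1$ and $\sigma_2$ to a perfect matching of $G^A_2$. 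Therefore
\begin{align*}
\sgn(\sigma_1)\sgn(\sigma_2)\prod_{i=1}^m a'_{i\sigma_1(i)\sigma_2(i)}
&=\Bigl(\sgn(\sigma_1)\prod_{i=1}^m\varepsilon^{(1)}_{i\sigma_1(i)}\Bigr)\Bigl(\sgn(\sigma_2)\prod_{i=1}^m\varepsilon^{(2)}_{i\sigma_2(i)}\Bigr)\prod_{i=1}^m a_{i\sigma_1(i)\sigma_2(i)}\\
&=\prod_{i=1}^m a_{i\sigma_1(i)\sigma_2(i)},
\end{align*}
using the two $+1$ identities from the first step. Summing over all $(\sigma_1,\sigma_2)$ gives $\det(A')=\Per(A)$, so $A$ is Kasteleyn.

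The only real content is the first step, passing from the bare statement ``$B_i$ is Kasteleyn'' to the uniform-sign reformulation; this is an elementary unwinding of the definition (for a $0/1$ matrix, equality of permanent and signed determinant forces all determinant monomials indexed by perfect matchings to carry sign $+1$), but it must be used in exactly this form for the three-dimensional bookkeeping to close. I would also record that the degenerate cases are harmless: if some $G^A_i$ has no perfect matching (for instance if $B_i$ has a zero row or column), then no pair $(\sigma_1,\sigma_2)$ survives, $\Per(A)=0$, and the signing $A'$ above also gives $\det(A')=0$.
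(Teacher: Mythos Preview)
Your proof is correct and follows essentially the same route as the paper: define $A'_{abc}=\varepsilon^{(1)}_{ab}\varepsilon^{(2)}_{ac}A_{abc}$ from Pfaffian signings of the two biadjacency matrices, then check term by term that each nonzero summand of $\det(A')$ equals the corresponding summand of $\Per(A)$. Your explicit justification that, for a $0/1$ biadjacency matrix, $\Per=\det$ forces every matching term to carry sign $+1$ is exactly the content the paper invokes with the phrase ``by the construction of $\sign_i$''.
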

\begin{proof}
Let $M_i$ be the biadjacency matrix of $G_i$ and let $\sign_i:E(G^A_i)\mapsto\{-1,1\}$ be the signing of the entries of $M_i$ which defines matrix $M_i'$ such that $\Per(M_i)= \det(M_i')$. 
We define 3-matrix $A'$ by 
$$
A'_{abc}= \sign_1(\{a,b\})\sign_2(\{a,c\})A_{abc}.
$$
We have
$$
\det(A')= \sum_{\s_1}\sign(\s_1)\times\sum_{\s_2}\sign(\s_2)
\prod_j \sign_2(\{j,\s_2(j)\})[\sign_1(\{j,\s_1(j)\})A_{j\s_1(j)\s_2(j)}].
$$
By the construction of $\sign_2$ we have that for each $\s_2$ and each $\s_1$, if
$\prod_j A_{j\s_1(j)\s_2(j)}\neq 0$ then
$$
\sign(\s_2)\prod_j \sign_2(\{j,\s_2(j)\})= 1.
$$
Hence
\begin{align*}
\det(A')&= \sum_{\s_1}\sign(\s_1)\times\sum_{\s_2}\prod_j [\sign_1(\{j,\s_1(j)\})A_{j\s_1(j)\s_2(j)}]\\
&=\sum_{\s_2}\times\sum_{\s_1}\sign(\s_1)\prod_j \sign_1(\{j,\s_1(j)\})A_{j\s_1(j)\s_2(j)}.
\end{align*}
Analogously by the construction of $\sign_1$ we have that for each $\s_1$ and each $\s_2$, if
$\prod_j A_{j\s_1(j)\s_2(j)}\neq 0$ then
$$
\sign(\s_1)\prod_j \sign_1(\{j,\s_1(j)\})= 1.
$$
Hence
$$
\det(A')= \sum_{\s_1,\s_2}\prod_j A_{j\s_1(j)\s_2(j)}= \Per(A).
$$

\end{proof}

In the introduction we defined the triadjacency 3-matrix of a triangular configuration as the adjacency matrix of
the {\em edges} of the triangles. We also defined a {\em matching} of a triangular configuration as a set of edge-disjoint triangles. In this section we concentrate on the vertices rather than on the edges.

A triangular configuration $\Delta$ is {\em vertex-tripartite} if vertices of $\Delta$ can be divided into three disjoint sets $V_1,V_2,V_3$ such that every triangle of $\Delta$ contains one vertex from each set $V_1,V_2,V_3$. We call the sets $V_1,V_2,V_3$ vertex-tripartition of $\Delta$.

The {\em vertex-adjacency} 3-matrix $A(x)=\left(a_{ijk}\right)$ of a vertex-tripartite triangular configuration $\Delta$ with vertex-tripartition $V_1,V_2,V_3$ is defined as follows: We set
\begin{equation*}
a_{ijk}:=\begin{cases}
         x^{w(t)}& \text{if }i\in V_1,j\in V_2,k\in V_3\text{ forms a triangle }t\text{ with weight }w(t),\\
         0& \text{otherwise}.
        \end{cases}
\end{equation*}

We will need the following modification of the notion
of a matching.
%
A set of triangles of a triangular configuration is called {\em strong matching} if its triangles are mutually vertex-disjoint.

\begin{proof}[Proof of Theorem \ref{thm.main3}]
Let $M$ be a $n\times n$ matrix and let $G= (V_1, V_2, E)$ be the bipartite graph of its
non-zero entries. We have $\lvert V_1\rvert= \lvert V_2\rvert= n$.  We order vertices of each $V_i$, $i=1,2$ arbitrarily and let
$V_i= \{v(i,1), \ldots, v(i,n)\}$. Let $V'_i= \{v'(i,1), \ldots, v'(i,n)\}$ be disjoint copy of $V_i$, $i= 1,2$.

We next define three sets of vertices $W_1, W_2, W_0$ and system of triangles $\D(G)= \D$ so that each triangle intersects each $W_i$ in exactly one vertex. 
\begin{align*}
W_1&= V_1\cup V_1'\cup \{w(1,e)\vert e\in E\},\\
W_2&= V_2\cup V_2'\cup \{w(2,e)\vert e\in E\},\\
W_0&= \{w(0,e)\vert e\in E\}\cup \{w(0,i,j)\vert i=1,2;  j= 1, \ldots, n\},
\end{align*}
\begin{align*}
\D= &\cup_{e= ab\in E}\{(a,b,w(0,e)), (w(0,e),w(1,e),w(2,e))\}\cup\\
&\cup_{j=1}^n\{(w(0,1,j), v'(2,j), w(1,e))\vert v(1,j)\in e\}\cup\\
&\cup_{j=1}^n\{(w(0,2,j), v'(1,j), w(2,e))\vert v(2,j)\in e\}.
\end{align*}

We let $A$ be the vertex-adjacency 3-matrix of the triangular configuration $\T(G)= \T= (W_0, W_1,W_2, \D)$.
We first observe that both bipartite graphs $G_1, G_2$ of $A$ (introduced before Theorem \ref{thm.k1}) are planar;
let us consider only  $G_1$, the reasoning for $G_2$ is the same. First, vertices $v'(1,j)$ and $w(0,2,j)$ 
are connected only among themselves in $G_1$. Further, the component of $G_1$ containing vertex $v(1,j)$
contains also vertex $w(0,1,j)$ and consists of $deg_G(v(1,j))$ disjoint paths of length $3$ between these two vertices. Here $deg_G(v(1,j))$  denotes the degree of $v(1,j)$ in graph $G$, i.e., the number of edges of $G$ incident with $v(1,j)$. Thus, by Theorem \ref{thm.k1}, $A$ is Kasteleyn.

We next observe that Kasteleyn signing is trivial. Let $D_1$ be the orientation of $G_1$ in which each edge is directed from $W_0$ to $W_1$. In each planar drawing of $G_1$, each inner face has an odd number of edges directed in $D_1$ clockwise. This means that $D_1$ is a {\em Pfaffian orientation} of $G_1$, and  $\Per(A)= \det(A)$
(see e.g. Loebl~\cite{L} for basic facts on Pfaffian orientations and Pfaffian signings).

Finally there is a bijection between the perfect matchings of $G$ and the perfect strong matchings of $\T$:
 if $P\subset E$ is a perfect matching of $G$ then let 
$$
P(\T)= \{(a,b,w(0,e))\vert e=ab\in P\}.
$$
We observe that $P(\T)$ can be uniquely extended to a perfect strong matching of $\T$, namely by
the set of triples $S_1\cup S_2\cup S_3$ where

\begin{align*}
S_1&= \cup_{e\in E\setminus P}\{(w(0,e),w(1,e),w(2,e))\},\\
S_2&= \cup_{j=1}^n\{(w(0,1,j), v'(2,j), w(1,e))\vert v(1,j)\in e\in P\},\\
S_3&= \cup_{j=1}^n\{(w(0,2,j), v'(1,j), w(2,e))\vert v(2,j)\in e\in P\}.
\end{align*}

Set $S_1$ is inevitable in any perfect strong matching containing $P(\T)$ since the vertices $w(0,e); e\notin P$ 
must be covered. This immediately implies that sets $S_2, S_3$ are inevitable as well.

On the other hand, if $Q$ is a perfect strong matching of $\T$ then $Q$ contains $P(\T)$ for some perfect matching
$P$ of $G$.

\end{proof}

\section{Application to 3D dimer problem}
\label{s.3dimer}
Let $Q$ be cubic $n\times n\times n$ lattice. The dimer partition function of $Q$, which is equal to the generating
function of the perfect matchings of $Q$, can be identified (by Theorem \ref{thm.main3}) with the determinant
of the Kasteleyn vertex-adjacency matrix of triangular configuration $\T(Q)$.
Natural question arises whether this observation can be used to study the 3D dimer problem. 

 We first observe that the natural embedding of $Q$ in 3-space can be simply modified to yield an embedding of $\T(Q)$ in 3-space. This can perhaps best be understood by figures, see 
Figure \ref{fig.3dimer}; this figure depicts configuration $\T(Q)$ around vertex $v$ of $Q$ with neighbors $u_1, \ldots, u_6$.

\begin{figure}[h]
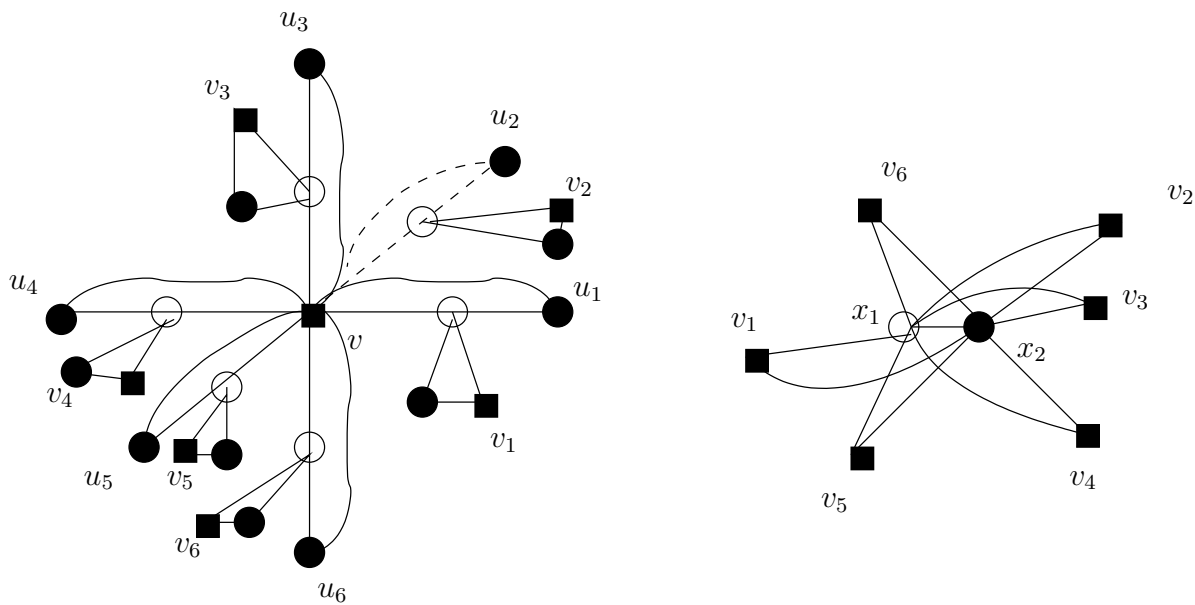

\begin{center}
\input 3dimer.pstex_t
\end{center}
\caption{Configuration $\T(Q)$ around vertex $v$ of $Q$ with neighbors $u_1, \ldots, u_6$ so that $u_1, u_3, u_4, u_6$ belong to the same plane in the 3-space, $u_2$ is 'behind' this plane and $u_5$ is 'in front of' this plane. Empty vertices belong to $W_0$, square vertices belong to $W_1$ and full vertices belong to $W_2$.}
\label{fig.3dimer}
\end{figure}

Triangular configuration  $\T(Q)$ is obtained by identification of vertices $v_i, i= 1,\ldots, 6$ in the left and right parts of Figure \ref{fig.3dimer}. Now assume that the embedding of left part of Figure \ref{fig.3dimer} is such that for each vertex $v$ of $Q$, the vertices $v_1, \ldots, v_6$ belong to the same plane and the convex closure of $v_1, \ldots, v_6$ intersects the rest of the configuration only in $v_1, \ldots, v_6$. Then we add the embedding of the right part, for each vertex $v$ of $Q$, so that
$x_1$ belongs to the plane of the $v_i$'s and $x_2$ is very near to $x_1$ but outside of this plane.

Summarizing, the dimer partition function of a finite 3-dimensional cubic lattice $Q$ may be written as the determinant of the vertex-adjacency 3-matrix of  triangular configuration   $\T(Q)$ which preserves the natural embedding of the cubic lattice. 

Calculating the determinant of a 3-matrix is hard, but perhaps formulas
for the determinant of the particular vertex-adjacency 3-matrix of $\T(Q)$, illuminating
the 3-dimensional dimer problem, may be found. An example of a formula valid for the determinant of a 3-matrix
is shown in the next subsection. It is new as far as we know but its proof is basically identical to the proof of Lemma 3.3 of Barvinok~\cite{B}.

\subsection{Binet-Cauchy formula for the determinant of 3-matrices}
\label{sub.fd}

We recall from the introduction that the determinant of a $n\times n\times n$ 3-matrix $A$ is defined to be
\begin{equation*}
\det(A)=\sum_{\s_1,\s_2\in S_n}\sign(\s_1)\sign(\s_2)\prod^n_{i=1}a_{i\sigma_1(i)\sigma_2(i)}.
\end{equation*} 

The next formula is a generalization of Binet-Cauchy formula (see the proof of Lemma 3.3 in Barvinok~\cite{B}).

\begin{lemma}
\label{l.bcf}
Let $A^1, A^2, A^3$ be real $r\times n$ matrices, $r\leq n$. For a subset $I\subset \{1, \ldots, n\}$ of 
cardinality $r$ we denote by $A^s_I$ the $r\times r$ submatrix of the matrix $A^s$ consisting of the columns of $A^s$ indexed by the elements of the set $I$. Let $C$ be the 3-matrix defined, for all $i_1, i_2, i_3$  by
$$
C_{i_1, i_2, i_3}= \sum_{j= 1}^n A^1_{i_1,j}A^2_{i_2,j}A^3_{i_3,j}.
$$
Then 
$$
\det(C)= \sum_I \Per(A^1_I)\det(A^2_I) \det(A^3_I),
$$ 
where the sum is over all subsets $I\subset \{1, \ldots, n\}$ of 
cardinality $r$
\end{lemma}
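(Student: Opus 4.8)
The plan is to expand the determinant of $C$ directly from its definition and then collect terms according to which index $j$ is chosen in the summation defining each entry $C_{i_1,i_2,i_3}$. Writing out
$$
\det(C)= \sum_{\s_1,\s_2\in S_r}\sign(\s_1)\sign(\s_2)\prod_{i=1}^r C_{i,\s_1(i),\s_2(i)}
= \sum_{\s_1,\s_2\in S_r}\sign(\s_1)\sign(\s_2)\prod_{i=1}^r\Bigl(\sum_{j=1}^n A^1_{i,j}A^2_{\s_1(i),j}A^3_{\s_2(i),j}\Bigr),
$$
I would expand the product over $i$ into a sum over functions $\vphi\colon\{1,\dots,r\}\to\{1,\dots,n\}$, obtaining
$$
\det(C)=\sum_{\vphi}\sum_{\s_1,\s_2\in S_r}\sign(\s_1)\sign(\s_2)\prod_{i=1}^r A^1_{i,\vphi(i)}A^2_{\s_1(i),\vphi(i)}A^3_{\s_2(i),\vphi(i)}.
$$

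The next step is to argue that only injective $\vphi$ contribute. Fix $\vphi$ and consider the inner sum over $\s_1$: it equals $\bigl(\prod_i A^1_{i,\vphi(i)}\bigr)\cdot\sum_{\s_1}\sign(\s_1)\prod_i A^2_{\s_1(i),\vphi(i)}$ times the analogous $\s_2$-sum. If $\vphi$ is not injective, say $\vphi(i)=\vphi(i')$ with $i\neq i'$, then in $\sum_{\s_1}\sign(\s_1)\prod_i A^2_{\s_1(i),\vphi(i)}$ the transposition $(i\,i')$ acting on $\s_1$ pairs terms of opposite sign with equal value, so this sum vanishes; hence the whole $\vphi$-term is $0$. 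Therefore the sum reduces to injective $\vphi$, each of which factors as $\vphi=\iota_I\circ\pi$ where $I=\im(\vphi)$ is an $r$-subset of $\{1,\dots,n\}$, $\iota_I$ is the increasing bijection $\{1,\dots,r\}\to I$, and $\pi\in S_r$ records the ordering.

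Then I would reindex. Grouping injective $\vphi$ by their image $I$ and absorbing the ordering permutation $\pi$ into $\s_1,\s_2$ (replacing $\s_1\mapsto\s_1\pi^{-1}$, etc., and noting the sign factors $\sign(\pi)$ cancel in the $\sign(\s_1)\sign(\s_2)$ pair while $\pi$ alone survives against $A^1$), the $\vphi$-sum over image $I$ collapses to
$$
\sum_{\pi\in S_r}\Bigl(\prod_i A^1_{i,\iota_I(\pi(i))}\Bigr)\Bigl(\sum_{\s_1}\sign(\s_1)\prod_i A^2_{\s_1(i),\iota_I(\pi(i))}\Bigr)\Bigl(\sum_{\s_2}\sign(\s_2)\prod_i A^3_{\s_2(i),\iota_I(\pi(i))}\Bigr),
$$
and after the substitution in $\s_1,\s_2$ the inner sums become $\det(A^2_I)$ and $\det(A^3_I)$ (independent of $\pi$), while $\sum_{\pi\in S_r}\prod_i A^1_{i,\iota_I(\pi(i))}=\Per(A^1_I)$. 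Summing over $I$ yields $\det(C)=\sum_I\Per(A^1_I)\det(A^2_I)\det(A^3_I)$.

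The only delicate point — the step I expect to require the most care — is the bookkeeping of signs when absorbing the ordering permutation $\pi$ into $\s_1$ and $\s_2$: one must check that $\sign(\s_1\pi^{-1})\sign(\s_2\pi^{-1})=\sign(\s_1)\sign(\s_2)$ (true since the two $\sign(\pi^{-1})$ factors multiply to $1$), so that $\Per$ rather than $\det$ appears on the $A^1$ factor while $\det$ is retained on the other two. Everything else is a routine multilinear expansion, and indeed the argument runs exactly parallel to the proof of Lemma 3.3 in Barvinok~\cite{barvinok1}, the only change being that the symmetry-breaking at the repeated-index step kills the sign on one of the three factors.
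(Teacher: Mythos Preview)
Your argument is correct and is essentially identical to the paper's proof: both expand $\det(C)$ into a sum over maps $\vphi$ (the paper writes sequences $J=(j_1,\dots,j_r)$), kill the non-injective ones via the vanishing of $\det(\tilde A^2_J)$, and then use that $\det(\tilde A^2_J)\det(\tilde A^3_J)$ is invariant under reordering $J$ so that the sum over orderings of a fixed image $I$ produces $\Per(A^1_I)$. The only cosmetic difference is that you phrase the last step as a substitution $\s_i\mapsto\s_i\pi^{-1}$ with the two $\sign(\pi)$ factors cancelling, whereas the paper states directly that the product of the two column-permuted determinants is unchanged under a transposition of $J$.
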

\begin{proof}
\begin{align*}
\det(C) &= \sum_{\s_1,\s_2\in S_r}\sign(\s_1)\sign(\s_2)\prod^r_{i=1}
\sum_{j=1}^nA^1_{i,j}A^2_{\s_1(i),j}A^3_{\s_2(i),j}\\
&=\sum_{\s_1,\s_2\in S_r}\sign(\s_1)\sign(\s_2)\times \sum_{1\leq j_1,\ldots,j_r\leq n}
\prod^r_{i=1}A^1_{i,j_i}A^2_{\s_1(i),j_i}A^3_{\s_2(i),j_i}\\
&=\sum_{1\leq j_1,\ldots,j_r\leq n}\sum_{\s_1,\s_2\in S_r}\sign(\s_1)\sign(\s_2)\times 
\prod^r_{i=1}A^1_{i,j_i}A^2_{\s_1(i),j_i}A^3_{\s_2(i),j_i}.
\end{align*}
Now, for all $J= (j_1,\ldots, j_r)$ we have 
\begin{align*}
&\sum_{\s_1,\s_2\in S_r}\sign(\s_1)\sign(\s_2)\times 
\prod^r_{i=1}A^1_{i,j_i}A^2_{\s_1(i),j_i}A^3_{\s_2(i),j_i}\\
=&(\prod_{i=1}^rA^1_{i,j_i})(\sum_{\s_1}\sign(\s_1)\prod_{i=1}^rA^2_{\s_1(i),j_i})
(\sum_{\s_2}\sign(\s_2)\prod_{i=1}^rA^3_{\s_2(i),j_i})\\
=&(\prod_{i=1}^rA^1_{i,j_i})\det(\tilde A^2_J)\det(\tilde A^3_J),
\end{align*}
where $\tilde A^s_J$ denotes the $r\times r$ matrix whose $i$th column is the $j_i$th column of matrix $A^s$.

If sequence $J$ contains a pair of equal numbers then the corresponding summand is zero,
since $\det(\tilde A^2_J)$ is zero. Moreover, if $J$ is a permutation, and $J'$ is obtained from $J$
by a transposition, then 
$$
\det(\tilde A^2_J)\det(\tilde A^3_J)= \det(\tilde A^2_{J'})\det(\tilde A^3_{J'}).
$$

 Therefore Lemma \ref{l.bcf} follows.

\end{proof}

\ifx\undefined\bysame
	\newcommand{\bysame}{\leavevmode\hbox
to3em{\hrulefill}\,}
\fi

\end{document}